\definecolor{gray}{rgb}{0.4,0.4,0.4}
\definecolor{white}{rgb}{1,1,1}
\definecolor{red}{rgb}{0.8,0,0}
\title{Index Sets of Universal Codes}
\keywords{}
\subjclass[2010]{Primary 03D30, 03D80}
\author{Achilles A. Beros}
\author{Konstantinos A. Beros}
\newtheorem{Theorem}{Theorem}[section]
\newtheorem{Proposition}[Theorem]{Proposition}
\newtheorem{Corollary}[Theorem]{Corollary}
\theoremstyle{definition}
\newtheorem{Definition}[Theorem]{Definition}
\newcommand{\bbn}{\mathbb N}
\newcommand{\indset}[1]{\textsc{#1}}
\newcommand{\baire}{\omega^{\omega}}
\newcommand{\bairestr}{\omega^{<\omega}}
\newcommand{\upto} {{\upharpoonright}}
\begin{document}

\begin{abstract}
We examine sets of codes such that certain properties are invariant under the choice of oracle from a range of possible oracles and establish a connection between such codes and Medvedev reductions.   In examing the complexity of such sets of \emph{universal codes}, we prove completeness results at various levels of the arithmetic hierarchy as well as two general theorems for obtaining $\Pi_1^1$-completeness for sets of universal codes.  Among other corollaries, we show that the set of codes for Medvedev reductions of bi-immune sets to DNC functions is $\Pi_1^1$-complete.
\end{abstract}

\maketitle

\section{Introduction}

Throughout, we will use the notation $\{ e \}^A (x)$ for the result of applying the Turing machine coded by $e$, with oracle $A$, to input $x$.  If the attempt to compute $\{ e \}^A (x)$ halts and gives output $y \in \mathbb N$, we write
\[
\{ e \}^A (x) \downarrow = y.
\]
Otherwise, we write $\{ e \}^A (x) \uparrow$ to indicate that the computation of $\{ e \}^A (x)$ never terminates.  Note the the oracle $A$ may be either a subset of $\bbn$ or a (possibly partial) function on $\bbn$.  In the event that $A$ is a partial function, we require $\{ e \}^A (x)$ to diverge if an oracle query is made for an input not in the domain of $A$.

We let $\{ e \}^A_s (x)$ denote the result of allowing the computation of $\{ e \}^A (x)$ to run for $s$ computation stages.  Finally, $\mathrm{use} (\{ e \}^A_s (x))$ will denote the largest oracle query made by the computations $\{ e \}^A_{s'}$, for $s' \leq s$.

Recall that, for sets $A,B \subseteq\mathbb N$, one says $A$ is {\em Turing reducible} to $B$ (denoted $A \leq_{\mathrm T} B$) if, and only if, there is an $e \in \mathbb N$ such that the computation of $\{ e \}^A (x)$ terminates with output 0 or 1, for each $x \in \mathbb N$, and
\[
(\forall x )(x \in A \iff \{ e \}^B(x) \downarrow = 1).
\]
In this case, we write $\{ e \}^B = \chi_A$ to indicate that the map
\[
x \mapsto \{ e \}^B (x)
\]
is the characteristic function of $A$.  Sets $A, B \subseteq \mathbb N$ are said to be {\em Turing equivalent} (written $A \equiv_{\mathrm T} B$) if, and only if, $A \leq_{\mathrm T} B$ and $B \leq_{\mathrm T} A$.  The {\em Turing degree} of $A \subseteq \mathbb N$ is the family $\{ B \subseteq \mathbb N : B \equiv_{\mathrm T} A\}$ of subsets of $\mathbb N$.

The relation of Turing reducibility gives a natural pre-order on the family of subsets of $\mathbb N$.  With this in mind, it is desirable to have a corresponding pre-order on the subsets of the Turing degrees themselves.  There are several natural ways of obtaining such a pre-order.  Extensive study has been done of two such pre-orders: {\em Muchnik reducibility} and {\em Medvedev reducibility}.

\begin{Definition}\cite{medvedev}\cite{muchnik}
Let $\mathcal A$ and $\mathcal B$ be sets of degrees.
\begin{enumerate}
\item $\mathcal A$ is {\em Muchnik reducible} (or {\em weakly reducible}) to $\mathcal B$ (written $\mathcal A \leq_w \mathcal B$) if 
\[
(\forall B \in \mathcal B)(\exists A \in \mathcal A)(\exists e \in \mathbb N)\Big( \{ e \}^B = \chi_A \Big)
\]
\item $\mathcal A$ is {\em Medvedev reducible} (or {\em strongly reducible}) to $\mathcal B$ (written $\mathcal A \leq_s \mathcal B$) if 
\[
(\exists e \in \mathbb N)(\forall B \in \mathcal B)(\exists A \in \mathcal A)\Big( \{ e \}^B = \chi_A\Big)
\]
\end{enumerate}
The oracle Turing machine determined by $e$ as above is called a Muchnik (resp., Medvedev) reduction of $\mathcal A$ to $\mathcal B$.
\end{Definition}

In the case that $\mathcal A \leq_s \mathcal B$, with $e$ such that
\[
(\forall B \in \mathcal B)(\exists A \in \mathcal A)\Big( \{ e \}^B = \chi_A\Big),
\]
we call $e$ a {\em code} witnessing the Medvedev reduction of $\mathcal A$ to $\mathcal B$.  In the present work, we study the arithmetic complexity of sets of codes of Medvedev reductions associated to various classes $\mathcal A$ and $\mathcal B$.  We make the following definition.

\begin{Definition}\label{D1}
Let $\mathcal A$ and $\mathcal B$ be families of subsets of $\bbn$.  We say that $e \in \bbn$ is a \emph{$\mathcal B$-universal $\mathcal A$-code} if
\[
(\forall B \in \mathcal B) (\exists A \in \mathcal A) \big( \{e\}^B = \chi_A \big).
\]
We let $\mathcal B [\mathcal A]$ denote the set of all $\mathcal B$-universal $\mathcal A$-codes.
\end{Definition}

By definition, a $\mathcal B$-universal $\mathcal A$-code determines a Medvedev reduction of $\mathcal A$ to $\mathcal B$ and $\mathcal B [ \mathcal A]$ corrresponds to the set of Medvedev reductions of $\mathcal A$ to $\mathcal B$.

As a more general case of this definition, we suppose that $\mathcal A$ and $\mathcal B$ are classes of partial functions instead of oracles.  Consider the following:

\begin{Definition}\label{D2}
Suppose that $\mathcal F, \mathcal G$ are families of (possibly partial) functions on $\bbn$.  We say that $e \in \bbn$ is a {\em $\mathcal G$-universal $\mathcal F$-code} if, and only if, the function
\[
x \mapsto \{ e \}^g (x)
\]
is in $\mathcal F$, for each $g \in \mathcal G$.  As before, we let $\mathcal G [ \mathcal F]$ denote the set of $\mathcal G$-universal $\mathcal F$-codes.
\end{Definition}

Definition~\ref{D1} may be regarded as a special case of Definition~\ref{D2} by letting $\mathcal F$ (in Definition~\ref{D2}) be the class of characteristic functions of the sets in the degrees from the class $\mathcal A$ (in Definition~\ref{D1}) and $\mathcal G$ be the class of characteristic functions of the sets in $\mathcal B$.  

We will also combine the two definitions in certain case, e.g., we will consider index sets of the form $\mathcal G [ \mathcal A]$, where $\mathcal A$ is a family of sets and $\mathcal G$ is a family of functions.

Definition~\ref{D2} also reveals the reason for the choice of the terminology ``$\mathcal G$-universal $\mathcal F$-code'': if $e \in \mathcal G [ \mathcal F]$, then $\{ e \}^g$ codes a function in $\mathcal F$, regardless of the choice of oracle $g \in \mathcal G$.  The set $\mathcal G [\mathcal F]$ is therefore the set of codes, $e$, such that the behavior of the oracle machine coded by $e$ is invariant (in the sense that it is always in $\mathcal F$) under the choice of oracle $g \in \mathcal G$.  

In what follows, we make use of both Definitions~\ref{D1} and \ref{D2}.  It will always be clear from context which one applies.

There are obvious codes in $\mathcal G [ \mathcal F]$, for many choices of $\mathcal G$ and $\mathcal F$.  For instance, if $\indset{tot}$ denotes the family of total functions, a code for a total  function which makes no oracle queries will be in the class $\mathcal G [ \indset{tot}]$, for any choice of $\mathcal G$.  Naturally, it is of interest when there are nontrivial elements of $\mathcal G [\mathcal F]$.  In this case, we give ``nontriviality'' a precise meaning with the following definition.

\begin{Definition}
We say that a $\mathcal G$-universal $\mathcal F$-code $e$ is \emph{non-trivial} if there are $g_0,g_1 \in \mathcal G$ such that $\{e\}^{g_0} \neq \{e\}^{g_1}$ and \emph{strongly non-trivial} if for all $g_0,g_1 \in \mathcal G$, $\{e\}^{g_0} \neq \{e\}^{g_1}$.
\end{Definition}

The nontrivial elements of $\mathcal G [ \mathcal F]$ are, therefore, the codes which do not simply ignore the oracle $g \in \mathcal G$.

In many case, the structure of the class $\mathcal G [ \mathcal F]$ can be quite complex.  In the present work, we quantify this by establishing the complexity of the $\mathcal G [\mathcal F]$ for various classes $\mathcal G$ and $\mathcal F$.  In several cases these index sets are arithmetic, but, in more than one instance, they are beyond hyperarithmetic.  

Letting $\indset{inf}$ denote the family of infinite c.e.~sets and $\indset{tot}$ the family of total computable functions, we have the following arithmetic complexity result.

\begin{Proposition}
The index set $\indset{inf}[\indset{tot}]$ is $\Pi^0_3$-complete.
\end{Proposition}

Similarly, letting $\indset{fin}$ denote the set of partial computable functions with finite domain, we have 

\begin{Proposition}
If $\mathcal F$ is any uniformly computable family of sets, $\mathcal F[\indset{fin}]$ has a strongly non-trivial element and is $\Pi_3^0$-complete.
\end{Proposition}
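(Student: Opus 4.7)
The plan is to establish three claims: (i) the upper bound $\mathcal F[\indset{fin}] \in \Pi^0_3$, (ii) $\Pi^0_3$-hardness by reducing the complete set $\overline{\indset{Cof}} = \{e : W_e \text{ is not cofinite}\}$, and (iii) the existence of a strongly non-trivial code. I write $\mathcal F = \{A_n : n \in \bbn\}$ with $(n,k) \mapsto \chi_{A_n}(k)$ computable; we may assume $\mathcal F$ enumerates infinitely many distinct sets, as the distinct subsequence is computably extractable ($A_n \neq A_m$ is $\Sigma^0_1$).

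The upper bound is immediate: $e \in \mathcal F[\indset{fin}]$ unpacks to $(\forall n)(\exists N)(\forall x \geq N)(\forall s) [\{e\}^{A_n}_s(x) \uparrow]$, whose matrix is $\Delta^0_1$ by uniform computability of $\mathcal F$, placing the index set in $\Pi^0_3$.

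For hardness, given $e$ I would use the $s$-$m$-$n$ theorem to build $\psi(e)$ so that $\{\psi(e)\}^g(x) \downarrow$ iff
\[
T_x(g) := \{n \leq x : A_n \upto (x+1) = g \upto (x+1)\} \neq \emptyset \text{ and } [\min T_x(g) + 1, x] \subseteq W_e,
\]
a $\Sigma^0_1$ condition in the oracle $g$ (the least match $\min T_x(g)$ is computed by a finite $g$-decidable search, followed by a semi-decidable enumeration test on $W_e$). For $g = A_m$ with $m$ the least index satisfying $A_m = g$, I would argue that $\min T_x(g)$ equals $m$ for every $x$ past the finite threshold $\max\{x^*_n : n < m,\ A_n \neq A_m\}$, where $x^*_n$ is the first position of disagreement between $A_n$ and $A_m$; hence for such $x$ the defining condition reduces to $[m+1,x] \subseteq W_e$, and by monotonicity $\mathrm{dom}(\{\psi(e)\}^{A_m})$ is infinite iff $(m, \infty) \subseteq W_e$. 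Since the set $I$ of least representing indices is infinite and hence cofinal in $\bbn$, $\psi(e) \in \mathcal F[\indset{fin}]$ iff $(\forall m \in I)\, (m, \infty) \not\subseteq W_e$, iff $W_e$ is not cofinite, iff $e \in \overline{\indset{Cof}}$.

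For strong non-triviality, I would exhibit $e_0$ with $\{e_0\}^g(n) \downarrow = n$ iff $(\forall k \leq n)(\exists y)[g(y) \neq A_k(y)]$, a uniformly $\Sigma^0_1$ condition in $g$. For $g \in \mathcal F$ with least representing index $m^*$, the domain equals $\{0, \ldots, m^* - 1\}$, finite; distinct $g_0, g_1 \in \mathcal F$ yield distinct $m^*$ and so distinct domains. The main obstacle is in the hardness step: one must verify that the ``least match'' $\min T_x(g)$ locks onto the correct index $m$ for $g = A_m$ once $x$ is sufficiently large, rather than being undercut by some smaller $n$ whose $A_n$ agrees with $A_m$ on a long prefix. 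This is resolved by the finiteness of pairwise first-disagreement positions among distinct members of the uniformly computable family $\mathcal F$.
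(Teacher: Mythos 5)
Your proof is correct and follows essentially the same strategy as the paper's: the upper bound is read off the quantifier structure, hardness is obtained by reducing from the $\Pi^0_3$-complete set of non-cofinite c.e.\ indices, and both the non-triviality witness and the hardness reduction work by computing the least index $m$ of the family member matching the oracle's prefix and then testing a $\Sigma^0_1$ condition parameterized by $m$. The only difference is cosmetic: the paper phrases the $\Sigma^0_1$ test as a movable-marker condition (the $i$-th element of $\overline{W}_{e,s}$ moving past stage $x$), whereas you check $[\min T_x(g)+1,x] \subseteq W_e$ directly, and you are explicit about the fact --- which the paper leaves tacit --- that the set of least representing indices must be cofinal, which is where the (implicit) assumption that $\mathcal F$ contains infinitely many distinct members is used.
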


Before stating our main result, we recall a couple of standard definitions.  A total function $f: \bbn \rightarrow \bbn$ is {\em diagonally non-computable} (abbreviated, {\em dnc}) if, and only if, for each $e \in \bbn$,
\[
\{ e \} (e) \downarrow = y \implies f(e) \neq y.
\]
We let $\indset{dnc}$ denote the family of dnc functions.  A set, $A \subseteq \bbn$, is {\em immune} if it contains no infinite c.e.~set, and {\em bi-immune} if both $A$ and $\overline A$ are immune.  Let $\indset{bi}$ denote the family of bi-immune sets.  Finally, recall that a set $P \subseteq \bbn$ is $\Pi^1_1$ if it is many-one reducible to the set of Turing codes for characteristic functions of recursive trees in $\bbn^{< \omega}$, which have no infinite branches.

It is known that there is a Medvedev reduction of $\indset{dnc}$ to $\indset{bi}$ \cite{jo-le}.  In fact, the corresponding set of codes is as complicated as possible:

\begin{Theorem}
$\indset{dnc}[\indset{bi}]$ is $\Pi^1_1$-complete.
\end{Theorem}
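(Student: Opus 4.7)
The plan is to establish the two halves of completeness separately.

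For the upper bound, I observe that bi-immunity of $B$ is the $\Pi^0_3$ condition ``for every c.e.~index $i$, $W_i$ is finite or both $W_i \cap B$ and $W_i \cap \overline{B}$ are nonempty'', while ``$\{e\}^B$ is total and diagonally non-computable'' is arithmetic in $B$. Hence $e \in \indset{dnc}[\indset{bi}]$ unfolds as a universal quantifier over $B \subseteq \bbn$ of an arithmetic condition, which is $\Pi^1_1$.

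For the lower bound I reduce $\nopath$ to $\indset{dnc}[\indset{bi}]$. I fix once and for all a Jockusch-Lewis Medvedev reduction $\psi$ of $\indset{dnc}$ to $\indset{bi}$, and a computable partition $\bbn = \bigsqcup_i J_i$ into infinite, uniformly computable blocks $J_i = \{j_{i,0} < j_{i,1} < \cdots\}$. Given (an index of) a recursive tree $T \subseteq \bairestr$, I build uniformly in $T$ a code $e_T$ so that, on oracle $B$ and input $n$, $\{e_T\}^B(n)$ dovetails, for each $i$, the $B$-search for $s_i := \min\{k : j_{i,k} \in B\}$; at every stage it has a current prefix $\hat\sigma = (s_0,\ldots,s_{m-1})$, and it halts with output $\psi^B(n)$ as soon as $m \geq n$ and $\hat\sigma \notin T$.

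For the forward direction, if $T$ has no infinite branch then for every bi-immune $B$ each infinite c.e.~block $J_i$ must meet $B$, so every $s_i$ is defined and the dovetailing produces arbitrarily long $\hat\sigma$; since $(s_0,s_1,\ldots)$ cannot be an infinite branch of $T$, some prefix eventually exits $T$, and the machine outputs $\psi^B(n)$, which is dnc. For the reverse direction, given an infinite branch $p = (p_0,p_1,\ldots)$ of $T$, I construct a bi-immune $B$ whose extraction equals $p$, i.e., $j_{i,p_i} \in B$ and $j_{i,0},\ldots,j_{i,p_i-1} \notin B$ for every $i$. Each block $J_i$ has only finitely many positions pinned by this encoding and cofinitely many free, so a routine finite-extension construction acting only on free positions meets the bi-immunity requirements ``$W_e \cap B, W_e \cap \overline{B}$ both nonempty for every infinite c.e.~$W_e$''. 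For this $B$, every prefix of the extracted sequence lies in $T$, the halting clause never fires, $\{e_T\}^B$ is nowhere defined, and $e_T \notin \indset{dnc}[\indset{bi}]$.

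The main obstacle is twofold. First, the forward direction needs the extraction to succeed on \emph{every} bi-immune oracle, not only on the oracles I construct; this is exactly why the blocks $J_i$ are taken to be infinite c.e.~sets, so that bi-immunity of $B$ itself forces $J_i \cap B \neq \emptyset$. Second, the reverse direction requires packaging a possibly non-computable and arbitrarily fast-growing branch $p$ into a bi-immune set; this is possible because bi-immunity is a density-type condition that is insensitive to prescribing finitely many bits per block, so the encoding and the bi-immunity requirements do not interfere.
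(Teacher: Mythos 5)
Your proof targets the wrong index set. By the paper's Definitions 1.2--1.3, $\indset{dnc}[\indset{bi}]$ is the set of codes $e$ such that $\{e\}^g$ is the characteristic function of a \emph{bi-immune set} for every \emph{dnc oracle} $g$; the oracle class is $\indset{dnc}$, the target class is $\indset{bi}$. Your entire argument runs in the opposite direction: you feed the machine bi-immune sets $B$ as oracles and try to force $\{e_T\}^B$ to be diagonally non-computable. That is a proof about $\indset{bi}[\indset{dnc}]$, not $\indset{dnc}[\indset{bi}]$; your upper-bound paragraph exhibits the same reversal.

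This is not a harmless relabeling, because your construction leans on a reduction that does not exist. You fix ``a Jockusch--Lewis Medvedev reduction $\psi$ of $\indset{dnc}$ to $\indset{bi}$,'' i.e.\ (in the paper's convention of Definition~1.1) $\indset{dnc} \leq_s \indset{bi}$, a uniform procedure producing a dnc function from an arbitrary bi-immune oracle, and you output $\psi^B(n)$. The Jockusch--Lewis theorem \cite{jo-le} gives precisely the other direction, $\indset{bi} \leq_s \indset{dnc}$: from a dnc function one uniformly computes a bi-immune set. They also show the converse fails --- there are bi-immune degrees that are not of DNC degree --- so no $\psi$ of the kind you invoke exists, even non-uniformly. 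Both halves of your reduction therefore collapse. Your block-decomposition encoding of branches of $T$ into the oracle is actually close in spirit to the $\Delta^0_2$-embedding machinery the paper uses (Theorem~\ref{pi one one}), but the correct version embeds $\omega^\omega$ into the \emph{oracle} class $\indset{dnc}$, pairs this with the genuine reduction $\Phi$ witnessing $\indset{bi} \leq_s \indset{dnc}$, and spoils the bi-immune \emph{output} when the oracle encodes a branch, rather than trying to extract dnc behavior from a bi-immune oracle.
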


\noindent (Note that, if $\mathcal A$ and $\mathcal B$ are hyperarithmetic, then $\mathcal B[\mathcal A]$ is at most $\Pi^1_1$.)  

This latter result is a consequence of a more general theorem which, informally, states that, if $\omega^\omega$ may be ``effectively'' embedded into a hyperarithmetic class, $\mathcal A$, and $\mathcal B$ is a hyperarithmetic tail set, Medvedev reducible to $\mathcal A$, then the index set $\mathcal A [ \mathcal B ]$ is always $\Pi^1_1$-complete.

\section{Basic facts and notation}

The next Proposition makes explicit the connection between universal codes and Medvedev reductions.

\begin{Proposition}\label{medvedev-reduct}
Suppose that $\mathcal A \subseteq \baire$ and $\mathcal B$ is a family of (possibly partial) functions on $\omega$.  $\mathcal A \leq_s \mathcal B$ if and only if $\mathcal B[\mathcal A] \neq \emptyset$.  Furthermore, $\mathcal B[\mathcal A]$ is the set of codes for reductions witnessing $\mathcal A \leq_s \mathcal B$.
\end{Proposition}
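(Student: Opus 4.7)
The proof is a direct unpacking of the two relevant definitions; I expect no substantive obstacle. The plan is as follows. First I would observe that, in the setting where $\mathcal A \subseteq \baire$ and $\mathcal B$ is a class of (possibly partial) functions on $\omega$, the relation $\mathcal A \leq_s \mathcal B$, read through the natural extension of Definition~\ref{D1}(2) to function-valued oracles (as spelled out immediately after Definition~\ref{D2}), asserts
\[
(\exists e \in \bbn)(\forall g \in \mathcal B)(\exists f \in \mathcal A)\bigl(\{e\}^g = f\bigr).
\]
On the other hand, Definition~\ref{D2} gives
\[
e \in \mathcal B[\mathcal A] \iff (\forall g \in \mathcal B)\bigl(\{e\}^g \in \mathcal A\bigr),
\]
which, since $\mathcal A \subseteq \baire$, is equivalent to $(\forall g \in \mathcal B)(\exists f \in \mathcal A)(\{e\}^g = f)$.

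The biconditional then falls out at once: $\mathcal B[\mathcal A] \neq \emptyset$ is exactly the existential closure (over $e \in \bbn$) of the defining condition of $\mathcal A \leq_s \mathcal B$, so the two assertions are equivalent. The ``furthermore'' clause follows from the same pair of characterizations: an index $e$ witnesses the Medvedev reduction $\mathcal A \leq_s \mathcal B$ if and only if it satisfies $(\forall g \in \mathcal B)(\exists f \in \mathcal A)(\{e\}^g = f)$, which is precisely the membership criterion for $\mathcal B[\mathcal A]$. The only matter requiring any care is purely bookkeeping: because the elements of $\mathcal A$ are total functions in $\baire$, the assertion $\{e\}^g \in \mathcal A$ tacitly packages both totality of $x \mapsto \{e\}^g(x)$ and its membership in $\mathcal A$, and this must be tracked when translating between the two formulations.
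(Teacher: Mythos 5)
Your proposal is correct and takes essentially the same route as the paper's own proof: both are direct unpackings of the defining formulas for $\leq_s$ and for $\mathcal B[\mathcal A]$, with the only point needing care being that $\{e\}^g \in \mathcal A \subseteq \baire$ already packages totality of $x \mapsto \{e\}^g(x)$, a point the paper makes by observing that the resulting function is total and hence computable from the oracle.
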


\begin{proof}
Suppose $\mathcal A$ and $\mathcal B$ are as in the statement and $\mathcal B[\mathcal A] \neq \emptyset$.  If $e \in \mathcal B[\mathcal A]$, then for each $B \in \mathcal B$, not only is $A = \{e\}^B \in \mathcal A$ c.e.~in $B$, but it is computable from $B$.  Thus, $e$ codes a turing functional that uniformly computes elements of $\mathcal A$ from elements of $\mathcal B$ -- in other words, it is a Medvedev reduction.  Conversely, if $\Psi(\sigma,x) = \{e\}^{\sigma}(x)$ witnesses $\mathcal A \leq_s \mathcal B$, then for every $B \in \mathcal B$, $\Psi(B)$ is total and hence computable in $B$.

\end{proof}

In light of this theorem, we will begin our investigation of universal codes by examining the complexity of $\mathcal B[\mathcal A]$ for some classes where it is known or trivial to see that $\mathcal A \leq_s \mathcal B$.

\begin{Proposition}
The index set $\indset{inf}[\indset{tot}]$ is $\Pi^0_3$-complete.
\end{Proposition}

\begin{proof}
First of all, to see that $\indset{inf}[\indset{tot}]$ is $\Pi^0_3$, observe that
\[
e \in \indset{inf}[\indset{tot}] \iff (\forall a,n) (a \in \indset{fin} \vee (\exists s)(\forall t \geq s) (\{e\}^{W_{a,t}}_t (n) \downarrow)).
\]

Next, to see that $\indset{inf}[\indset{tot}]$ is $\Pi^0_3$-hard, let $B$ be a fixed $\Pi^0_3$ set, with $g : \mathbb N \times \mathbb N \rightarrow \mathbb N$ a total computable function such that 
\[
x \in B \iff (\forall y) (g(x,y) \in \indset{fin}),
\]
for each $x \in \mathbb N$.  

For an infinite set $A$, let $C^*_n(A)$ denote the set of $i$ such that $i$ is the $\langle n , k\rangle$-th element of $A$, for some $k$.  If $A$ is infinite, note that each $C^*_n (A)$ is also infinite.  Let $f : \mathbb N \rightarrow \mathbb N$ be a total computable function such that, for each $e \in \mathbb N$,
\[
\{f(e)\}^A (n) \downarrow \iff (\exists m \in C^*_n (A)) ( W_{g(e,n),m+1} = W_{g(e,n),m}).
\]
In the first place, if $e \in B$, then $g(e,n) \in \indset{fin}$, for each $n \in \mathbb N$.  It follows that $\{ f(e) \}^A (n) \downarrow$, for each $n$, since $A$ is infinite and $W_{g(e,n) , m+1} = W_{g(e,n) , m}$, for all but finitely many $m$.

On the other hand, if $e \notin B$, then there exists $n$ such that $W_{g(e,n)}$ is infinite.  Let $A \subseteq \mathbb N$ be an infinite c.e.~set such that
\[
C^*_n (A) \subseteq \{ m : W_{e,m+1} \neq W_{e,m}\}.
\]
It follows that $\{ f(e) \}^A (n)$ never converges, since there is no $m = C_n^* (A)$ such that  $W_{e,m+1} = W_{e,m}$.  In other words, $f(e) \notin \indset{inf}[\indset{tot}]$.
\end{proof}

\begin{Proposition}\label{finComp}
If $\mathcal F$ is any uniformly computable family, $\mathcal F[\indset{fin}]$ has a strongly non-trivial element and is $\Pi_3^0$-complete.
\end{Proposition}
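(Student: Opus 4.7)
The plan is to establish the $\Pi^0_3$ upper bound, then prove $\Pi^0_3$-hardness via a reduction from a standard $\Pi^0_3$-complete set, and finally address the existence of a strongly non-trivial element.

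Let $\mathcal F = \{A_i\}_{i \in \omega}$ be the uniform enumeration of $\mathcal F$. For the upper bound, observe that
\[
e \in \mathcal F[\indset{fin}] \iff (\forall i)(\exists N)(\forall n \geq N)\bigl(\{e\}^{A_i}(n) \uparrow\bigr).
\]
Since $A_i$ is uniformly computable in $i$, the predicate $\{e\}^{A_i}(n) \uparrow$ is $\Pi^0_1$ in $i, e, n$, so the whole formula is $\Pi^0_3$.

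For $\Pi^0_3$-hardness, fix a $\Pi^0_3$-complete $B$ of the form $B = \{x : (\forall y)(W_{g(x, y)} \in \indset{fin})\}$ with $g$ computable, and pad the enumeration of each $W_{g(x, y)}$ so that its $k$-th element appears no sooner than stage $2^{y+k}$. I would build $h$ via the s-m-n theorem so that $\{h(x)\}^f(\langle y, k\rangle)$ halts iff there is some stage $s$ with $f \upto s = A_y \upto s$ and the $k$-th element of $W_{g(x, y)}$ has been enumerated by stage $s$. For $x \in B$ and any $f = A_j \in \mathcal F$, each row $y$ contributes a finite number of halting inputs---equal to $|W_{g(x, y)}|$ when $A_y = A_j$ and bounded by the elements enumerated before $f$ first diverges from $A_y$ otherwise. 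The padding ensures the aggregate is finite, so $\{h(x)\}^f \in \indset{fin}$. For $x \notin B$ with $W_{g(x, y_0)}$ infinite, taking $f = A_{y_0}$ yields infinite domain via the row $y = y_0$ alone, witnessing $h(x) \notin \mathcal F[\indset{fin}]$.

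For the strongly non-trivial element, I would reuse the same reduction by picking $x_0 \in B$ such that each $W_{g(x_0, y)}$ is nonempty and its last element is enumerated at a sufficiently late stage. Then for distinct $A_{j_1}, A_{j_2} \in \mathcal F$, the contribution at the row $y = j_1$ is strictly larger for oracle $A_{j_1}$ than for $A_{j_2}$ (the former uses all of $W_{g(x_0, j_1)}$, the latter only the prefix enumerated before $A_{j_1}$ and $A_{j_2}$ first diverge), yielding different domains and hence distinct functions $\{h(x_0)\}^{A_{j_1}}$ and $\{h(x_0)\}^{A_{j_2}}$. The main obstacle is controlling the aggregate contribution across all rows when $x \in B$: the padding must grow fast enough to suppress rows $y$ whose elements $A_y$ happen to share arbitrarily long prefixes with $f$, and the tension between this suppression (needed for $\{h(x)\}^f \in \indset{fin}$) and the distinctness required for strong non-triviality is the most delicate aspect of the argument.
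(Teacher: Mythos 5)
Your upper-bound argument is fine and matches the paper's. The problems are in the hardness reduction and in how you package the strongly non-trivial element into it.

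The gap you flag at the end is real, and you have not resolved it. With a fixed padding that delays the $k$-th element of $W_{g(x,y)}$ to stage $\geq 2^{y+k}$, the contribution of row $y$ to the domain of $\{h(x)\}^{A_j}$ is controlled by $p_y$, the length of the longest common prefix of $A_j$ and $A_y$: roughly, row $y$ contributes $\max\{0, \lfloor\log_2 p_y\rfloor - y\}$ inputs. Nothing in the hypotheses of the proposition bounds $p_y$ by a computable function of $y$; the family $\mathcal F$ might be arranged so that, for some $j$, $p_y \geq 2^{2^y}$ for infinitely many $y$, in which case each such row contributes on the order of $2^y$ inputs and the aggregate is infinite even though every $W_{g(x,y)}$ is finite. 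A padding schedule fixed in advance cannot suppress rows whose members agree with the oracle for arbitrarily long, because the agreement lengths depend on $\mathcal F$ in a way that is not uniformly bounded. So the claim ``the padding ensures the aggregate is finite'' is not justified, and the reduction, as described, does not establish $\Pi^0_3$-hardness.

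The paper sidesteps this by not summing over rows at all. Its reduction computes, at each input $x$, the \emph{least} index $i$ such that the oracle agrees with $F_i$ up to length $x$, and then decides convergence at $x$ by asking whether a movable marker attached to that single index $i$ has moved past $x$ (the markers track $\overline{W}_e$, and the reduction is from $\indset{coinf}$). For oracle $F_j$, the identified index $i$ stabilizes for all large $x$, so only finitely many ``wrong'' indices ever influence the domain; there is simply no infinite sum to control. Contrast this with your construction, where every row $y$ independently gets a chance to add elements to the domain.

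For the strongly non-trivial element, trying to extract it from $h(x_0)$ for a specially chosen $x_0 \in B$ compounds the difficulty: you first need the reduction to work, and then you need extra properties of $x_0$ (all $W_{g(x_0,y)}$ nonempty, late final stages) that are not guaranteed by merely picking a $\Pi^0_3$-complete set. The paper instead constructs this element directly and separately: a code whose computation at input $x$ converges exactly when the least index $i$ with $\sigma\upto x = F_i\upto x$ changes as $x$ increments, and outputs that index. For $A = F_j$ this stabilizes, giving a finite domain, and distinct members of $\mathcal F$ produce distinct stabilized indices, hence distinct functions. This is a cleaner and fully self-contained argument, and it is the same ``identify the index from the oracle prefix'' idea that also powers the hardness reduction.
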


\begin{proof}
Let $\mathcal F = \{ F_0, F_1, \ldots \}$ be a uniformly computable family.  Let $p$ be a computable function such that $p(\sigma) = i$ if $i < |\sigma|$ is least such that $\sigma = F_i \upto |\sigma|$ and $p(\sigma) = -1$ if there is no such $i$.  Define a code $e$ such that
\[
\{ e \}^A(x) = \begin{cases}
p(A\upto x) &\mbox{if $p(A\upto x) \neq p(A\upto(x-1)$}\\

\uparrow & \mbox{otherwise}
\end{cases}.
\]
If $A \in \mathcal F$, then $\{e\}^A$ has finite domain and if $A,B \in \mathcal F$ with $A \neq B$, then $\{e\}^A \neq  \{e\}^B$.

We now prove that $\mathcal F[\indset{fin}]$ is $\Pi_3^0$-complete.  To see that $\mathcal F[\indset{fin}]$ is $\Pi_3^0$, observe that
\[
\mathcal F[\indset{fin}] = \Big\{ e : (\forall i)(\exists b)(\forall x > b, s)\big( \{e\}_s^{F_i}(x)\uparrow \big) \Big\}.
\]

We use a movable markers argument to prove hardness.  Let $f: \bbn^2 \rightarrow \{0,1\}$ be a computable function such that $f_i(x) = f(i,x)$ is the characteristic function of $F_i$.  Define a computable function $g$ such that $g(e,n,s) = y$ if and only if $y$ is the $n^{th}$ element of $\overline{W}_{e,s}$.  Define a computable function $h$ such that $\{h(e)\}^{\sigma}(x) \downarrow = 1$ if
\begin{enumerate}
\item $|\sigma| \geq x$ and
\item if $i$ is least such that $\sigma \upto x = f_i \upto x$, then there is an $s > x$ such that $g(e,i,s) \neq g(e,i,x)$.
\end{enumerate}

If $e \in \indset{cof}$, then for all $i > \max(\overline{W}_e)$ and $x \in \bbn$, there is a $y > x$ such that $g(e,i,y) \neq g(e,i,x)$.  Thus, for all $i > \max(\overline{W}_e)$, $\{h(e)\}^{F_i}$ is total and $h(e) \not\in \mathcal F[\indset{fin}]$.  On the other hand, if $e \in \indset{coinf}$, then for all $i$ $\lim_{x \rightarrow \infty} g(e,i,x)$ exists.  In other words, for every $i \in \bbn$ there is an $x$ such that for all $y > x$, $g(e,i,y) = g(e,i,x)$.  We conclude that $\{h(e)\}^{F_i}$ is finite for all $i \in \bbn$ and $h(e) \in \mathcal F[\indset{fin}]$.  Thus, $h$ witnesses the desired result: $\mathcal F[\indset{fin}]$ is $\Pi_3^0$-hard.

\end{proof}

Note that while an oracle drawn from a uniformly computable family does not confer additional computational power it does affect the output of an oracle program.

\begin{Theorem}
$\Delta_2^0[\indset{fin}]$ is $\Pi_4^0$-complete and contains a strongly non-trivial element.
\end{Theorem}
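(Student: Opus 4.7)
My plan is to establish three components: the $\Pi_4^0$ upper bound, $\Pi_4^0$-hardness, and the existence of a strongly non-trivial element.

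For the upper bound, I parameterize each $\Delta_2^0$ set via a code $a$ of a computable approximation, setting $A_a(x) = \lim_s \{a\}(x,s)$. The predicate ``$a$ is a valid $\{0,1\}$-valued approximation with pointwise limits'' is $\Pi_3^0$ (it has the shape $\forall x\,\exists s\,\forall t$), and the relation $A_a(k) = v$ is $\Sigma_2^0$ in $(a,k,v)$. From this, $\{e\}^{A_a}(x)\downarrow$ is $\Sigma_2^0$, so $\{e\}^{A_a}\in\indset{fin}$ is $\Sigma_3^0$, and the defining condition
\[
(\forall a)\bigl[a\ \text{valid} \to \{e\}^{A_a} \in \indset{fin}\bigr]
\]
is $\Pi_4^0$.

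For hardness, I reduce from $B = \{e : (\forall i)(W_{g(e,i)} \in \indset{cof})\}$, which is $\Pi_4^0$-complete since $\indset{cof}$ is $\Sigma_3^0$-complete. The construction generalizes the movable-markers argument of Proposition \ref{finComp}. Given $e$, I define $h(e)$ so that $\{h(e)\}^A(\langle i, j\rangle)\downarrow$ precisely when $A$ provides, in column $i$ together with a certification column, a pair $(k, s)$ with $k > j$, $\langle i, k\rangle \in A$, $s \geq k$, and $k \notin W_{g(e,i),s}$. When some $W_{g(e,i_0)}$ is coinfinite, the $\Delta_2^0$ set placing $\overline{W}_{g(e,i_0)}$ into column $i_0$ paired with $\emptyset'$-computable certifications produces an oracle $A$ that forces $\{h(e)\}^A$ to have infinite domain. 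Conversely, when every $W_{g(e,i)}$ is cofinite, no $\Delta_2^0$ oracle should induce infinite convergence.

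The main technical obstacle is the $e \in B$ direction: I must rule out ``trivial'' oracles (such as $A = \bbn$) from spuriously certifying convergence at small stages $s$ when $W_{g(e,i),s}$ is still empty. I plan to handle this by choosing $g$ via padding that speed-locks the enumeration of $W_{g(e,i)}$ with its membership, so that the condition $k \notin W_{g(e,i),s}$ with $s \geq k$ becomes equivalent to $k \in \overline{W}_{g(e,i)}$. Under this convention only the finitely many complement elements can certify convergence at column $i$, and the input coding can be tuned so that the total number of converging inputs is finite.

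For the strongly non-trivial element I adapt the construction from Proposition \ref{finComp}: define a computable code $e$ so that $\{e\}^A$ outputs, at a finite set of positions determined by $A$'s early values, a fingerprint of $A$ that distinguishes distinct $\Delta_2^0$ sets. Since $\Delta_2^0$ is countable and its members have pairwise distinct characteristic functions, the construction (using, for instance, the first positions at which $A$ deviates from canonical candidates in a uniform enumeration) yields an injection from $\Delta_2^0$ to finite-domain partial functions while keeping each $\{e\}^A$ finite.
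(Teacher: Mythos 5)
Your $\Pi_4^0$ upper bound argument is correct and is actually a bit more direct than the paper's: you count quantifiers on the parameterization $a \mapsto A_a$, whereas the paper introduces auxiliary predicates $\Gamma$ and $\Sigma$ (bounding the use function and detecting divergence, respectively) and proves a separate characterization of $\Delta_2^0[\indset{fin}]$ via them. One technical point you should make explicit: the relation $A_a(k)=v$ is only $\Sigma_2^0$ \emph{conditional} on $a$ being a total $\{0,1\}$-valued approximation; literally it is $\Sigma_3^0$. This is fine because you are working inside the guard ``$a$ valid,'' but the quantifier bookkeeping deserves a sentence.

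The hardness reduction contains a genuine gap, and you identified exactly where it is but proposed a fix that cannot work. You want ``$k \notin W_{g(e,i),s}$ with $s\geq k$'' to be equivalent to ``$k \in \overline{W}_{g(e,i)}$,'' but that would mean $W_{g(e,i)}$ is enumerated in time bounded by the identity, i.e.\ $W_{g(e,i)}$ would be \emph{computable}. If every $W_{g(e,i)}$ is computable, then the source set $B=\{e : (\forall i)(W_{g(e,i)}\in\indset{cof})\}$ collapses (the parameterized family no longer witnesses $\Sigma_3^0$-completeness of $\indset{cof}$), so padding cannot rescue the construction. The paper's construction solves the same obstacle differently: it lets the oracle supply, for each $j$, a \emph{fixed} bound $m_j := \min C_{j+1}(A)$, and the machine converges on $n$ at some stage $s$ only when, for every $j\leq n$, both $m_j < s$ and $m_j > \max(W_{g(e,i,j),s})$. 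Because the stage $s$ is forced to exceed the oracle's bound $m_j$, and $\max(W_{g(e,i,j),s})$ is nondecreasing and unbounded when $W_{g(e,i,j)}$ is infinite, the $j_0$-check (for a $j_0$ with $W_{g(e,i,j_0)}\in\indset{inf}$) can only succeed for $s$ in a \emph{finite} window; and then, since the column minima $\min C_{n+1}(A)$ are distinct and must all be below that window for the $n$-check to succeed, only finitely many inputs $n$ can converge. This ``stage must exceed the oracle's bound'' mechanism is precisely what your scheme is missing, and it is not replaceable by an enumeration-speed assumption on the $W_{g(e,i)}$.

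Finally, your sketch for the strongly non-trivial element does not go through as stated: there is no uniformly computable listing of $\Delta_2^0$ sets, so the ``canonical candidates in a uniform enumeration'' trick from Proposition~\ref{finComp} (which crucially uses that $\mathcal F$ is uniformly computable) does not transfer. This part requires a genuinely different idea, and you should treat it as an open piece of the argument rather than an adaptation.
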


\begin{proof}
First, we prove that $\Delta_2^0[\indset{fin}]$ is $\Pi_4^0$.  To this end, we say that $d \in \omega$ is a {\em $\Delta^0_2$ code} if $\{ d \}$ is a total function and, for each $x \in \omega$
\[
\lim_{s \rightarrow \infty} \{ d \} (\langle s,x \rangle)
\]
exists, where $\langle \cdot , \cdot \rangle$ is a fixed computable pairing function.  Note that the predicate ``$d$ is a $\Delta^0_2$ code'' is $\Pi^0_3$.  Supposing that $d$ is a $\Delta^0_2$ code, define the functions $d_t$, for $t \in \omega$, by 
\[
d_t (x) = \{ d \} (\langle t , x \rangle)
\]
and $d_*$ by
\[
d_* ( x) = \lim_{t \rightarrow \infty} d_t (x).
\]

Define predicates $\Gamma$ and $\Sigma$ as follows:
\begin{align*}
\Gamma (e,d,x) &\iff (\forall k) (\exists t) (\mathrm{use} (\{ e\}^{d_t}_t (x)) \geq k)\\
&\iff \{ \mathrm{use} (\{ e \}^{d_t}_t (x)) : t \in \omega\} \mbox{ is unbounded}.
\end{align*}
and
\[
\Sigma (e,d,x) \iff (\forall s ) (\exists t \geq s) ( \{ e \}^{d_t}_t (x) \uparrow).
\]
It will follow that $e \in \Delta^0_2$ if, and only if,
\begin{equation}\label{E1}
(\forall d) \left( d \mbox{ is a $\Delta^0_2$ code }   \implies (\exists m) (\forall x \geq m)(\Gamma(e,d,x) \vee \Sigma(e,d,x))\right).
\end{equation}
In particular, this will show that $\Delta^0_2 [\indset{fin}]$ is $\Pi^0_4$.

Indeed, suppose that $d$ is a $\Delta^0_2$ code and $x \in \omega$.  Suppose first that $\{ e \}^{d_*} (x)$ converges.  Let $k_0 = \mathrm{use} (\{ e \}^{d_*} (x))$ and let $t_0$ be large enough that, for each $t \geq t_0$, 
\[
d_t \upto k_0 = d_* \upto k_0.
\]
It follows that, for each $t \geq t_0$, 
\[
\mathrm{use}(\{ e \}^{d_t}_t (x)) \leq k_0
\]
and, hence, 
\[
\{ \mathrm{use} (\{ e \}^{d_t}_t (x)) : t \in \omega\}
\]
is bounded, i.e., $\neg \Gamma (e,d,x)$.  Also, if $s_0$ is large enough that $\{ e \}^{d_*}_{s_0} (x)$ has converged and $t\geq \max\{ s_0 , t_0 \}$, then $\{ e \}^{d_t}_t (x)$ converges.  Thus, $\neg \Sigma(e,d,x)$.  As $d$ and $x$ were arbitrary, this establishes the ``$\implies$'' part of (\ref{E1}).

On the other hand, suppose that $x \in \omega$ and $d$ is a $\Delta^0_2$ code such that the computation $\{ e \}^{d_*}(x)$ diverges.  There are two cases.  In the first place, suppose that $\mathrm{use} (\{ e \}^{d_*}_t)$ is unbounded, as $t \rightarrow \infty$.  Fix $k \in \omega$ and let $s$ be such that
\[
u = \mathrm{use} (\{ e \}^{d_*}_s (x) ) \geq k.
\]
Let $t_0$ be such that, for each $t \geq t_0$,
\[
d_t \upto u = d_* \upto u.
\]
If $t = \max\{ s,t_0 \}$, then 
\[
\mathrm{use} (\{ e \}^{d_t}_t (x)) \geq \mathrm{use} (\{ e \}^{d_t}_s (x))  = \mathrm{use} (\{ e \}^{d_*}_t (x)) \geq k.
\]
As $k$ was arbitrary, it follows that $\{ \mathrm{use} (\{ e \}^{d_t}_t : t \in \omega\}$ is unbounded, i.e., $\Gamma (e,d,x)$.  

Secondly, suppose that $\{ e \}^{d_*} (x)$ diverges, but the use of the computation is bounded, say
\[
\mathrm{use} (\{ e \}^{d_*}_t (x)) \leq u, 
\]
for all $t \in \omega$.  Let $t_0$ be such that
\[
d_t \upto u = d_* \upto u,
\]
for all $t \geq t_0$.  In particular, for each $t \geq t_0$, the computation $\{ e \}^{d_t}_t (x)$ is equivalent to the computation $\{ e \}^{d_*}_t (x)$.  Hence, $\{ e \}^{d_t}_t (x)$ diverges for all $t \geq t_0$.  Hence, $\Sigma (e,d,x)$ holds, since $d$ and $x$ were arbitrary.

Combining the two cases above establishes the ``$\Longleftarrow$'' part of (\ref{E1}).

Fix a $\Pi_4^0$ predicate, $Q$, and a computable function, $g$, such that
\[
Q(e) \leftrightarrow (\forall x)(\exists y)\big[ g(e,x,y) \in \indset{inf} \big]
\]
Define a computable function, $h$, such that 
\[
\{h(e)\}_s^A(n) = \begin{cases}
\langle A \upto n \rangle &\mbox{if }(\forall j \leq n)\big[ \min(C_{j+1}(A)) < s \\
	&\wedge\min(C_{j+1}(A)) > \max(W_{g(e,i,j), s}) \big]\\
\uparrow &\mbox{otherwise}
\end{cases},
\]
where $i = \min(C_0(A))$ and $\{h(e)\}^A(n) \uparrow$ for all $n\in\mathbb N$ if $C_0(A) = \emptyset$.

If $C_0(A) = \emptyset$, then $W_e = \emptyset$.  If $Q(e)$ and $C_0(A) \neq \emptyset$, let $i = \min(C_0(A))$.  Because $Q(e)$, there is a $j$ such that $g(e,i,j) \in \indset{inf}$.  Hence, for all but finitely many $s$, either $\min(C_{j+1}(A)) \geq s$ or $\min(C_{j+1}(A)) < \max(W_{g(e,i,j), s})$ and $W_e$ is finite.  We conclude that $h(e) \in \Delta^2_0[\indset{fin}]$.

Now suppose that ${}^{\neg}Q(e)$ and let $x$ be such that $(\forall y)[g(e,x,y)\in \indset{fin}]$.  Since the maximum of each $W_{g(e,x,y)}$ for $y\in\mathbb N$ can be found in the limit, there is a $\Delta^2_0$ oracle, $A$, such that $C_{j}(A) \neq \emptyset$ for all $j \in\mathbb N$, $\min(C_0(A)) = x$ and $\min(C_{j+1}(A)) > \max(W_{g(e,x,j)})$ for all $j \in \mathbb N$.  Since $W^A_{h(e)}$ is infinite and $A \in \Delta^2_0$, $h(e) \not\in \Delta^2_0[\indset{fin}]$.

Thus, $h$ reduces $Q$ to $\Delta^2_0[\indset{fin}]$.

\end{proof}

Before stating the next proposition, we recall the following standard definitions.  

\begin{Definition}\label{dnc def}
A function, $f$, is said to be \emph{diagonally non-computable} if $f(e) \neq \{e\}(e)$ whenever $\{e\}(e)\downarrow$.  Let $\indset{dnc} = \{ f \in \omega^\omega : f \mbox{ is DNC} \}$.  

A function, $f$, is {\em $n$ diagonally non-computable ($n$-DNC)} if it is diagonally non-computable and, additionally, $f(e) \leq n$, for each $e$.  Let $\indset{dnc}_n = \{ f \in \omega^\omega : f \mbox{ is $n$-DNC} \}$.
\end{Definition}

\begin{Theorem}
$\indset{dnc}_{n}[\indset{dnc}_{n+1}]$ is $\Pi_2^0$-complete for all $n \geq 2$.
\end{Theorem}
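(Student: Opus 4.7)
The plan is to establish the two bounds separately. For the $\Pi_2^0$ upper bound, I identify $\indset{dnc}_{n+1}$ with $[T]$, where
\[
T = \{\sigma \in \{0,\ldots,n+1\}^{<\omega} : (\forall k < |\sigma|)(\{k\}_{|\sigma|}(k) \downarrow = y \Rightarrow \sigma(k) \neq y)\}
\]
is a computable, finitely branching tree. Since $[T]$ is compact, the assertion ``$(\forall g \in [T])\ \{e\}^g(x) \downarrow \leq n$'' is equivalent to a $\Sigma_1^0$ statement: some stage $s$ makes $\{e\}^\sigma_s(x) \downarrow$ with value $\leq n$ for every $\sigma \in T$ of length $s$. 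The diagonal clause for a fixed $x$ can be rewritten as the finite conjunction
\[
\bigwedge_{y=0}^{n+1} \Big( \{x\}(x) \neq y \ \ \vee \ \ (\forall g \in [T])\ \{e\}^g(x) \neq y \Big),
\]
whose disjuncts are $\Pi_1^0$ and (by compactness, in conjunction with convergence) $\Sigma_1^0$. The clause is therefore $\Pi_2^0$, and the outer universal quantifier over $x$ preserves the complexity.

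For $\Pi_2^0$-hardness, I reduce $\indset{tot}$, the $\Pi_2^0$-complete set of codes of total computable functions. By Jockusch's results on Medvedev degrees of DNC functions, for every $n \geq 2$ there is a Medvedev reduction of $\indset{dnc}_n$ to $\indset{dnc}_{n+1}$; fix a code $r$ for such a reduction, so $r \in \indset{dnc}_n[\indset{dnc}_{n+1}]$. Using the $s$-$m$-$n$ theorem, construct a total computable $h$ such that $\{h(e)\}^g(x)$ first waits for $\{e\}(x)$ to halt and only then simulates $\{r\}^g(x)$. If $e \in \indset{tot}$, the delay is always finite, so $\{h(e)\}^g = \{r\}^g$ is $n$-DNC for every $g \in \indset{dnc}_{n+1}$, and $h(e) \in \indset{dnc}_n[\indset{dnc}_{n+1}]$. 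Otherwise, there is an $x_0$ with $\{e\}(x_0)\uparrow$, whence $\{h(e)\}^g(x_0)\uparrow$ for every $g$, so $h(e) \notin \indset{dnc}_n[\indset{dnc}_{n+1}]$.

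The main obstacle is securing the reference reduction $r$. The naive substitution $g(x) = \min(f(x),n)$ fails exactly when $f(x) = n+1$ and $\{x\}(x) = n$, and we have no way to detect this. A self-contained construction would use the recursion theorem (or $s$-$m$-$n$) to manufacture, for each $x$, auxiliary indices whose self-application channels the $(n+1)$-DNC property of $f$ into producing a value in $\{0,\ldots,n\} \setminus \{\{x\}(x)\}$. The hypothesis $n \geq 2$ is essential: for $n=1$, $\indset{dnc}_1$ has strictly higher Medvedev degree than $\indset{dnc}_m$ for $m \geq 2$, so no reduction exists and the corresponding index set is simply empty.
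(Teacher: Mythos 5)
Your proof does not establish the stated theorem because it inverts the paper's bracket convention. Per Definition~1.2 (of a $\mathcal B$-universal $\mathcal A$-code), $e \in \indset{dnc}_n[\indset{dnc}_{n+1}]$ means that for every oracle $g \in \indset{dnc}_n$, the function $\{e\}^g$ lies in $\indset{dnc}_{n+1}$ --- the \emph{subscripted} class supplies oracles, the \emph{bracketed} class constrains outputs. You have read it the other way: oracles from $\indset{dnc}_{n+1}$, outputs in $\indset{dnc}_n$. With the convention fixed, the nonemptiness witness is the inclusion $\indset{dnc}_n \subseteq \indset{dnc}_{n+1}$, which the identity functional realizes; no nontrivial Medvedev reduction is needed.

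That misreading is compounded by a factual error that should have been a warning sign: you assert ``for every $n \geq 2$ there is a Medvedev reduction of $\indset{dnc}_n$ to $\indset{dnc}_{n+1}$.'' Jockusch's result (your cited source) is precisely the \emph{opposite}: the classes $\indset{dnc}_n$ are strictly decreasing in Medvedev degree as $n$ grows, i.e.\ $\indset{dnc}_n >_s \indset{dnc}_{n+1}$ for all $n \geq 1$, and in particular $\indset{dnc}_n \not\leq_s \indset{dnc}_{n+1}$. (Your own remark about $n=1$ applies verbatim to every $n$.) Under your reading, the index set would actually be \emph{empty} --- there is no code $r$ to anchor your $s$-$m$-$n$ construction --- so the hardness argument never gets off the ground, and the ``obstacle'' you flag at the end (the failure of $\min(f(x),n)$ and the hoped-for recursion-theorem fix) is not an engineering difficulty but an impossibility.

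For comparison, the paper's hardness argument is short and does not invoke any deep reducibility facts: it reduces $\indset{inf}$ by defining $\{f(e)\}^\sigma \upto |\sigma| = \sigma \upto |W_{e,|\sigma|}|$, so that $\{f(e)\}^g = g$ when $W_e$ is infinite (and then $g \in \indset{dnc}_n \subseteq \indset{dnc}_{n+1}$, so $f(e)$ is a universal code), while $\{f(e)\}^g$ has finite domain when $W_e$ is finite. Your compactness computation for the upper bound is the right idea --- the oracle class is an effectively compact $\Pi^0_1$ class, so the outer universal quantifier over oracles collapses to $\Sigma^0_1$ per input --- but it must be applied to the oracle class $\indset{dnc}_n$, not $\indset{dnc}_{n+1}$, and the output condition should be membership in $\indset{dnc}_{n+1}$ (range bounded by $n+1$), not $\indset{dnc}_n$.
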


\begin{proof}
$e \not\in \indset{dnc}_{n}[\indset{dnc}_{n+1}]$ if and only if 
\[
(\exists \sigma \in \bairestr)\Big( \mbox{$\sigma$ is a $\indset{dnc}_{n}$ string} \wedge \mbox{$\{e\}^{\sigma}\upto |\sigma|$ is not a $\indset{dnc}_{n+1}$ string} \Big).
\]
The statement ``$\sigma$ is a $\indset{dnc}_{n}$ string'' is equivalent to
\[
(\forall x < |\sigma|)\Big( (\forall s)\big(\{x\}_s(x)\uparrow\big) \vee (\exists s)\big(\{x\}_s(x)\downarrow \neq \sigma(x) < n \big) \Big),
\]
and the statement ``$\{e\}^{\sigma}\upto |\sigma|$ is not a $\indset{dnc}_{n+1}$ string'' is equivalent to
\begin{align*}
(\exists x < |\sigma|)\Big( (\forall s)\big( \{e\}^{\sigma}_s(x) \uparrow \big) \vee (\exists s,y) \big( &\{e\}^{\sigma}_s(x) \downarrow = y \\
&\wedge (y = \{x\}_s(x) \downarrow \vee y \geq n + 1) \big) \Big).
\end{align*}
Thus, ``$e \not\in \indset{dnc}_{n}[\indset{dnc}_{n+1}]$'' is $\Sigma_2^0$, i.e.,  $\indset{dnc}_{n}[\indset{dnc}_{n+1}]$ is $\Pi_2^0$.

Define a computable function $f$ such that $\{f(e)\}^{\sigma}\upto |\sigma| = \sigma \upto |W_{e,|\sigma|}|$.  If $e \in \indset{inf}$, then $\{f(e)\}^g = g$ for all $g \in \baire$.  If $e \in \indset{fin}$, then the domain of $\{f(e)\}^g$ is finite for all $g \in \baire$.  Since every $\indset{dnc}_n$ is also $\indset{dnc}_{n+1}$, $f(e) \in \indset{dnc}_{n}[\indset{dnc}_{n+1}]$ if $e \in \indset{inf}$ and not otherwise, showing that $\indset{dnc}_{n}[\indset{dnc}_{n+1}]$ is $\Pi_2^0$-hard.

\end{proof}

\section{$\Pi^1_1$-completeness}

In what follows we give a general, but somewhat technical, theorem which implies that a number of natural index sets of the form $\mathcal A [ \mathcal B]$ are $\Pi^1_1$-complete.

\begin{Definition}
Let $c$ be a finite ordinal or $\omega$.  We say that a function $f: \omega^\omega \rightarrow c^\omega$ is a {\em $\Delta^0_2$ embedding} if, and only if, there is a uniformly computable sequence, $f_s : \omega^{<\omega} \rightarrow c^{<\omega}$, of total recursive functions such that the following conditions hold:
\begin{enumerate}
	\item $f_s (\alpha) \prec f_s(\beta)$ if, and only if, $\alpha \prec \beta$, i.e., each $f_s$ is a {\em $\prec$-isomorphism}
	\item $\lim_s f_s (\alpha)$ exists, for each $\alpha \in \omega^{<\omega}$
	\item $f(x) = \bigcup_n \lim_s f_s (x \upto n)$
\end{enumerate}
Note that the union $\bigcup_n \lim_s f_s (x \upto n)$ is a well-defined function on $\omega$ since the $f_s$ all preserve proper extension
\end{Definition}

\begin{Theorem}\label{pi one one}
Suppose that $\mathcal A  \subset c^\omega$ (where $c$ is a finite ordinal or $\omega$) and $\mathcal B \subseteq 2^\omega$ are hyperarithmetic, with $\mathcal B$ a tail set containing no finite sets and no cofinite sets, and there is a $\Delta^0_2$ embedding $f: \omega^\omega \rightarrow c^\omega$ such that the range of $f$ is relatively closed in $\mathcal A$.  If there is a Medvedev reduction, $\Phi$, of $\mathcal B$ to $\mathcal A$, then the set of $\mathcal A$-universal $\mathcal B$-codes is $\Pi^1_1$-complete.
\end{Theorem}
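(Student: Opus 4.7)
The upper bound $\mathcal A[\mathcal B] \in \Pi^1_1$ is already noted in the paper, so my plan is to establish $\Pi^1_1$-hardness by many-one reducing the $\Pi^1_1$-complete set of indices of computable well-founded trees in $\bairestr$ to $\mathcal A[\mathcal B]$. Given a computable tree $T \subseteq \bairestr$, I would construct uniformly in an index for $T$ a code $e(T)$ such that $T$ is well-founded if and only if $e(T) \in \mathcal A[\mathcal B]$. In the ill-founded case, the oracle $a = f(x_0) \in \mathcal A$, for any infinite branch $x_0$ of $T$, will witness that $e(T) \notin \mathcal A[\mathcal B]$; such an $a$ is in $\mathcal A$ because the range of $f$ is contained in $\mathcal A$.

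For the construction I would pair the fixed Medvedev reduction $\Phi$ of $\mathcal B$ to $\mathcal A$ with a branch detector built from the $\Delta^0_2$ approximations $f_s$. The program $\{e(T)\}^a(m)$ races two stage-by-stage processes: a simulation of $\Phi^a(m)$, and a detector which at stage $s$ searches for some $\alpha \in T$ with $|\alpha| > m$ and $f_s(\alpha) \prec a$, subject to a growing stability certificate (for instance, $f_{s'}(\alpha) = f_s(\alpha)$ throughout $s' \in [|\alpha|, s + |\alpha|]$) that is intended to rule out transient fluctuations of the $f_s$. Whichever process fires first determines the output: the detector firing produces $1$; $\Phi^a(m)$ halting produces its value.

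The analysis then splits on whether $T$ is well-founded. If it is, then for every $a \in \mathcal A$ only finitely many $\alpha \in T$ satisfy $f_\infty(\alpha) \prec a$: either $a = f(x)$ for some $x$ that is not a branch and so leaves $T$ at finite depth, or $a \in \mathcal A$ lies outside the range of $f$, in which case the relative closedness of the range gives a uniform length bound on such $\alpha$. Once the stability certificate is tuned to suppress transient $\alpha$'s, the detector will fire for only finitely many $m$, so $\{e(T)\}^a$ agrees with $\Phi^a$ on a cofinite tail and belongs to $\mathcal B$ by the tail-set property. In the ill-founded case, for $a = f(x_0)$ each $x_0 \upto (m+1) \in T$ stably maps into $a$ under $f_\infty$, so the detector fires at every $m$ and, provided the race is arranged to favor it, the output becomes $\chi_{\bbn}$---a cofinite set, hence excluded from $\mathcal B$.

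The hardest part will be designing the stability certificate and synchronizing it against $\Phi^a$. The pointwise convergence of the $f_s$ gives no uniform bound on transient $\alpha \in T$ with $f_\infty(\alpha) \not\prec a$ whose $f_s$-images happen to prefix $a$ at intermediate stages, so a naive detector could fire arbitrarily in the well-founded case and spoil the tail-set rescue. I expect to handle this by padding the simulation of $\Phi^a(m)$ with extra computation stages so that the detector has time to certify stability in the ill-founded case, and by letting the stability window grow with $|\alpha|$ so that in the well-founded case only finitely many $m$ can see a firing. The relatively-closed-range hypothesis is essential for this last step, since it provides the uniform bound on $|f_\infty(\alpha)|$ needed to control transient behaviour when $a \in \mathcal A$ lies outside the range of $f$.
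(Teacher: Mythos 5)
Your plan correctly identifies the reduction target ($\indset{NoPath}$), the role of $\Phi$, of the $\Delta^0_2$ embedding, of the tail-set hypothesis, and of relative closedness, which is in every essential respect the same strategy the paper uses. But your construction has a genuine gap, and it is the one you flag yourself: the stability certificate is not worked out, and your sample (``$f_{s'}(\alpha)=f_s(\alpha)$ throughout $s'\in[|\alpha|,s+|\alpha|]$'') would fail in the ill-founded case, because the window always starts at the fixed stage $|\alpha|$, so on a true branch whose approximations change after that stage the detector can never fire. The deeper problem is that the ``race plus padding'' design imposes two antagonistic demands---the detector must win on every input in the ill-founded case yet only finitely often in the well-founded case---and nothing in your sketch shows they can be met simultaneously. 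The paper avoids this entirely with a monotone construction: $\Phi_e(\sigma)=(\Phi(\sigma)\cup[0,n_{e,\sigma}])\cap[0,m_{e,\sigma}]$, where $n_{e,\sigma}$ is the maximum of $\min\{s,|\beta|\}$ over stages $s$, nodes $\alpha\in T_{e,|\sigma|}$, and $\beta\preceq f_s(\alpha)$ with $\beta\preceq\sigma$. The truncation $\min\{s,|\beta|\}$ is precisely the missing idea: a transient match found at stage $s$ can contribute at most $s$, and once the $f_s$ have stabilized on the relevant finite set the $\prec$-isomorphism property bounds $|\beta|$, giving a uniform bound on $n_{e,\sigma}$ in the well-founded case while still forcing $n_{e,\sigma}\to\infty$ along a branch in the ill-founded case---no race, no certificate, no padding, and the quantity is plainly computable from $\sigma$.

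You also omit the case of a non-total tree code $e$, which any many-one reduction from $\indset{NoPath}$ must handle since the reduction is a total function on $\omega$. The paper's $[0,m_{e,\sigma}]$ cutoff covers it: when $\{e\}$ is partial, $m_{e,\sigma}$ is bounded as $\sigma$ grows, so $\Phi_e(Y)$ is a finite set for every $Y$ and hence not in $\mathcal B$, which contains no finite sets. Your construction needs an analogous safeguard before the hardness reduction is complete.
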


\begin{proof}
Let $\mathcal A$, $\mathcal B$, $\Phi$ and $f$ be as above, with $f_s : \omega^{<\omega} \rightarrow c^{<\omega}$ witnessing that $f$ is a $\Delta^0_2$ embedding.  For convenience, define $f_* : \omega^{<\omega} \rightarrow c^{<\omega}$ by $f_* (\alpha) = \lim_s f_s (\alpha)$ and observe that, by the properties of the $f_s$, the map $f_*$ is also a $\prec$-isomorphism.  

In the first place, it follows from the definition of $\mathcal A [\mathcal B]$ and the fact that $\mathcal A$ and $\mathcal B$ are both hyperarithmetic that $\mathcal A [ \mathcal B ]$ is $\Pi^1_1$.  To show that $\mathcal A [ \mathcal B ]$ is $\Pi^1_1$-hard, it will suffice to reduce the $\Pi^1_1$-complete set $\indset{NoPath}$ (see Corollary IV.2.16 from \cite{odifreddi}) to $\mathcal A [ \mathcal B ]$, where $\indset{NoPath}$ is the set of codes for recursive trees with no infinite branches.  More precisely, if $\rho_0 , \rho_1 , \ldots$ is a recursive enumeration of $\omega^{<\omega}$ and $e \in \omega$, with $\{ e \}$ total, define $T_e$ to be the tree generated by 
\[
\{ \rho_i : \{ e \} (i) \downarrow = 1\}.
\]
Let
\[
\indset{NoPath} = \{ e : \{ e \} \mbox{ is total and } [T_e] = \emptyset\}.
\]

With this in mind, we will define Turing functionals $\Phi_e$ such that $\Phi_e$ is a Medvedev reduction of $\mathcal B$ to $\mathcal A$, if, and only if, $e \in \indset{NoPath}$.  For each $e \in \omega$ and $\sigma \in \omega^{<\omega}$, let
\[
n_{e , \sigma} = \max \{ \min \{ s , |\beta| \} : (\exists \alpha \in T_{e , |\sigma|}) (\beta \preceq f_s (\alpha) \wedge\sigma \succeq \beta\}
\]
and 
\[
m_{e , \sigma} = \max \{ m : (\forall i \leq m) ( \{ e \}_{|\sigma|} (\tau_i) \downarrow)\},
\]
where $\tau_0 , \tau_1 , \ldots$ is a fixed recursive enumeration of $c^{<\omega}$.  Finally, define 
\[
\Phi_e (\sigma) = (\Phi(\sigma) \cup [0,n_{e , \sigma}]) \cap [0 , m_{e , \sigma}].
\]

In the first place, if $\{ e\}$ is not total, then $m_{e,\sigma}$ is bounded as $\sigma$ varies over $\omega^{<\omega}$ and, hence, $\lim_{n \rightarrow \infty} \Phi_e (X \upto n)$ is a finite set (consequently, not in $\mathcal B$), for each $X \in \omega^\omega$.  It follows that $\Phi_e$ is not a Medvedev reduction of $\mathcal B$ to $\mathcal A$. 

Therefore, assume that $\{ e \}$ is total, i.e., $m_{e , \sigma} \rightarrow \infty$, as $|\sigma| \rightarrow \infty$.  

Suppose first that $[T_e] \neq \emptyset$, with $X \in [T_e]$.  The family $\mathcal B$ contains no cofinite sets (in particular, $\mathcal B$ does not contain $\omega$) and, hence, to show that $\Phi_e$ is not a Medvedev reduction of $\mathcal B$ to $\mathcal A$, it will suffice to show that $\Phi_e (X) = \bigcup_n \Phi_e (X \upto n) = \omega$.  In turn, it will be enough to show that
\[
\lim_{\substack{\sigma \prec X\\ |\sigma| \rightarrow \infty}} n_{e , \sigma} = \infty.
\]
Indeed, fix $n_0 \in \omega$ and let $\sigma_0 \prec f(X)$ be long enough that there exists an $\alpha_0 \in T_{e , |\sigma_0|}$ with $|f_* (\alpha_0)| \geq n_0$.  Next, let $s_0 \geq n_0$ be large enough that $f_s (\alpha) = f_* (\alpha)$, for each $s \geq s_0$ and $\alpha \preceq \alpha_0$.  Letting $\beta = f_* (\alpha_0) = f_{s_0} (\alpha_0)$, it follows that 
\[
n_{e , \sigma} \geq \min \{ s_0 , \beta \} \geq n_0,
\]
for every $\sigma \succeq \sigma_0$, by the definition of $n_{e , \sigma}$.

Next, assume that $[T_e] = \emptyset$.  Since $\mathcal B$ is a tail set, to show that $\Phi_e$ is Medvedev reduction of $\mathcal B$ to $\mathcal A$, it will suffice to show that, for each $Y \in \mathcal A$, the set $\{ n_{e , \sigma} : \sigma \prec Y\}$ is bounded and, hence, $\Phi_e(Y) $ differs only finitely from $\Phi(Y)$, for each $Y \in \mathcal A$.  

Indeed, fix $Y \in \mathcal A$.  First, suppose that $Y = f(X)$, for some $X \in \omega^\omega$.  Let $\alpha_0 \prec X$ be longest such that $\alpha_0 \in T_e$.  Let $s_0$ be large enough that $f_s (\alpha) = f_* (\alpha)$, for each $s \geq s_0$ and $\alpha \preceq \alpha_0$.  Fix $\sigma \prec Y = f(X)$, with $\sigma \succeq f_*(\alpha_0)$.  Suppose that $s \geq s_0$, $\alpha \in T_e$ and $\beta \in c^{<\omega}$, with $\beta \preceq f_s (\alpha)$ and $\sigma \succeq \beta$.  If $|\beta| > |f_* (\alpha_0)|$, then $f_s (\alpha) \succ f_*(\alpha_0) = f_s(\alpha_0)$.  Hence, $\alpha \succ \alpha_0$, since $f_s$ is a $\prec$-isomorphism.  This is a contradiction, since no extension of $\alpha_0$ is in $T_e$.  It follows that $|\beta| \leq |f_*(\alpha_0)|$.  Consequently, if $\alpha \in T_e$, $s \in \omega$ and $\beta , \sigma \in c^{<\omega}$ are such that $\beta \preceq f_s (\alpha)$ and $\sigma \succeq \beta$, then either $|\beta| \leq |f_* (\alpha_0)|$ or $s < s_0$.  It follows that 
\[
n_{e , \sigma} \leq \max \{ s_0 - 1 , |f_*( \alpha_0 )| \}, 
\]
for any $\sigma \prec Y$.

Finally, assume that $Y \in \mathcal A$, but $Y \neq f(X)$, for every $X \in \omega^\omega$.  Since the range of $f$ is relatively closed in $\mathcal A$, choose $\sigma_0\prec Y$ longest such that $\sigma_0 \prec f(X)$, for some $X \in \omega^\omega$.  Let $\alpha_0 \in \omega^{<\omega}$ be such that $f_*(\alpha_0) \succeq \sigma_0$.  Let $s_0$ be large enough that $f_s (\alpha) = f_* (\alpha)$, for all $s \geq s_0$ and $\alpha \preceq \alpha_0$.  Fix $\sigma \prec Y$, with $|\sigma| \geq |\sigma_0|$.  If $s$, $\beta$ and $\alpha$ are such that $\beta \preceq f_s (\alpha)$ and $\sigma \succeq \beta$, then either $\beta \preceq \sigma_0$ or $s < s_0$.  Thus, 
\[
n_{e , \sigma} \leq \max \{ s_0 - 1, |\sigma_0|\}
\]
and it follows that
\[
\{ n_{e , \sigma} : \sigma \prec Y\}
\]
is bounded.  This completes the proof.
\end{proof}

\begin{Definition} We define several forms of immunity and associated index sets.
\begin{enumerate}
\item $A$ is \emph{immune} if $A$ is infinite and contains no infinite c.e.~set.

\item $A$ is \emph{bi-immune} if $A$ and $\overline{A}$ are both immune.

\item $\indset{im} = \{ A \subset \mathbb N : A \mbox{ is immune} \}$.

\item $\indset{bi} = \{ A \subset \mathbb N : A \mbox{ is bi-immune} \}$.

\end{enumerate}
\end{Definition}

\begin{Corollary}\label{dnc-bi}
The index sets $\indset{dnc}[\indset{im}]$ and $\indset{dnc}[\indset{bi}]$ are both $\Pi^1_1$-complete. 
\end{Corollary}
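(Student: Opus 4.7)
The plan is to obtain both statements as direct applications of Theorem~\ref{pi one one} with $\mathcal{A} = \indset{dnc}$ (so $c = \omega$) and $\mathcal{B}$ taken to be $\indset{im}$ or $\indset{bi}$ in turn. The hypotheses on $\mathcal{B}$ are easy: both $\indset{im}$ and $\indset{bi}$ are arithmetic (hence hyperarithmetic); both are tail sets, since immunity and bi-immunity are preserved under finite symmetric differences; neither contains finite sets (immune sets are infinite by definition) nor cofinite sets (a cofinite set is an infinite c.e.~subset of itself, ruling out immunity of either $A$ or $\overline{A}$). The required Medvedev reduction $\mathcal{B} \leq_s \indset{dnc}$ is supplied by the Jockusch--Lewis result cited in the introduction as \cite{jo-le}: every DNC function uniformly computes a bi-immune set, and a bi-immune set is in particular immune, so the same reduction works for both choices of $\mathcal{B}$.

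The remaining ingredient, and the only nonroutine piece, is a $\Delta^0_2$ embedding $f\colon \omega^\omega \to \omega^\omega$ whose range lies in $\indset{dnc}$ and is relatively closed there. For this I would take the natural ``shift by a limit DNC function'' construction. Define
\[
h_s(n) = \begin{cases} \{n\}_s(n) + 1 & \text{if } \{n\}_s(n)\downarrow,\\ 0 & \text{otherwise,}\end{cases}
\]
so that $h(n) = \lim_s h_s(n)$ is a well-defined $\Delta^0_2$ function with $h(n) \neq \{n\}(n)$ whenever $\{n\}(n)\downarrow$. Now set $f_s(\alpha)(n) = \alpha(n) + h_s(n)$ for $n < |\alpha|$, and $f(X)(n) = X(n) + h(n)$. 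Each $f_s$ preserves length and agreement of prefixes, so it is a $\prec$-isomorphism in the sense required; $\lim_s f_s(\alpha)$ exists because $h_s \upto |\alpha|$ stabilizes; and $f(X) = \bigcup_n \lim_s f_s(X\upto n)$ by construction.

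To see that the range lies in $\indset{dnc}$, note that for any $X \in \omega^\omega$, if $\{n\}(n)\downarrow$ then $f(X)(n) = X(n) + h(n) \geq h(n) > \{n\}(n)$. Moreover, the range of $f$ is exactly $\{\,Y \in \omega^\omega : Y(n) \geq h(n) \text{ for all } n\,\} = \prod_n [h(n),\infty)$, which is closed in $\omega^\omega$; intersecting with $\indset{dnc}$ (in fact the range already sits inside $\indset{dnc}$) gives a relatively closed subset. All hypotheses of Theorem~\ref{pi one one} are then satisfied for each choice $\mathcal{B} \in \{\indset{im}, \indset{bi}\}$, and the corollary follows.

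The only step with any subtlety is verifying the embedding clause, and the key observation that makes it work painlessly is that shifting by a large enough value forces DNC-ness without affecting the tree structure on finite strings; the relative closedness is then automatic because the range is even closed in all of $\omega^\omega$. Everything else is bookkeeping about the definitions of immunity and bi-immunity.
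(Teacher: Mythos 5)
Your proof is correct and the overall skeleton matches the paper's: verify the tail-set, no-finite, no-cofinite, and hyperarithmetic conditions on $\indset{im}$ and $\indset{bi}$, cite Jockusch--Lewis for the Medvedev reduction, and then exhibit a $\Delta^0_2$ embedding into $\indset{dnc}$ with closed range.

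The one place you diverge from the paper is the construction of the embedding itself, and your version is genuinely different and cleaner. The paper defines $f_s(\alpha)(n)$ to encode the entire prefix $\alpha\upto n$ as an integer, taking the value $2\langle\alpha\upto n\rangle$ or $1 + 2\langle\alpha\upto n\rangle$ according to whether $\{n\}_s(n)$ collides with the first choice; the range is then the closed set of sequences that are ``self-describing'' in this sense. You instead take a coordinate-wise shift $f(X)(n) = X(n) + h(n)$, where $h(n) = \{n\}(n) + 1$ when $\{n\}(n)\downarrow$ and $0$ otherwise, with $h$ approximated by the obvious $\Delta^0_2$ stage functions $h_s$. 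Your $f_s$ are manifestly $\prec$-isomorphisms (they preserve length and act coordinate-wise bijectively), the limits exist because $h_s\upto|\alpha|$ stabilizes, the range lands in $\indset{dnc}$ because $f(X)(n) \geq h(n) > \{n\}(n)$ whenever $\{n\}(n)\downarrow$, and the range equals $\prod_n [h(n),\infty)$, which is closed in $\omega^\omega$ outright and hence relatively closed in $\indset{dnc}$. What the paper's version buys is nothing extra here; what yours buys is that the verification of all three embedding clauses and the closedness claim becomes nearly automatic, since you are shifting by an independently defined limit-computable DNC-forcing function rather than bootstrapping the DNC witness off the string's own history. Both are valid instantiations of Theorem~\ref{pi one one}.
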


\noindent See Definition~\ref{dnc def} for the definition of $\indset{dnc}$.

\begin{proof}
An examination of the relevant definitions reveals that the sets {\sc dnc}, {\sc im} and {\sc bi} satisfy the required topological and definability properties to apply Theorem~\ref{pi one one}.  Since there are known to be Medvedev reductions of $\indset{im}$ and $\indset{bi}$ to $\indset{dnc}$, it will suffice to define a function $f: \omega^\omega \rightarrow \omega^\omega$ which is a $\Delta^0_2$ embedding into $\indset{dnc}$.  

Let $\langle \cdot \rangle : \omega^{<\omega} \rightarrow \omega$ be a recursive coding of finite strings.  For each $\alpha \in \omega^{<\omega}$ and $n < |\alpha|$, define 
\[
f_s (\alpha) (n) = \begin{cases}
2 \langle \alpha \upto n \rangle &\mbox{if } \{n\}_s(n) \neq 2 \langle \alpha \upto n \rangle\\
1 + 2 \langle \alpha \upto n \rangle &\mbox{otherwise}
\end{cases}.
\]
Note that $f_s (\alpha)$ is a string of integers of length $|\alpha|$ and, moreover, that each $f_s$ is a $\prec$-isomorphism.  For $X \in \omega^\omega$, let $f(X)$ be as in the definition of a $\Delta^0_2$ embedding, i.e., 
\[
f(X) = \bigcup_n \lim_{s\rightarrow \infty} f(X \upto n).
\] 
First of all, note that the range of $f$ is the closed subset of $\omega^\omega$, consisting of those $X$ such that, for each $n \in \omega$, 
\[
X(n) = \begin{cases}
2\langle X \upto n \rangle &\mbox{if } \{ n \} (n) \downarrow \neq 2 \langle X \upto n \rangle\\
1 + 2 \langle X \upto n \rangle &\mbox{otherwise}.
\end{cases}
\]
Furthermore, note that $f(X) \in \indset{dnc}$, for each $X \in \omega^\omega$.  We may now apply Theorem~\ref{pi one one} to conclude that $\indset{dnc}[ \indset{bi} ]$ is $\Pi^1_1$-complete.
\end{proof}

With a couple more definitions, we will be able to state another corollary of Theorem~\ref{pi one one}

\begin{Definition}
Let $\mathcal P_{\mathsf{fin}} (\omega)$ denote the family of finite subsets of $\omega$.  A {\em canonical numbering} is a total computable function $H : \omega \rightarrow \mathcal P_{\mathsf{fin}} (\omega)$ such that 
\begin{enumerate}
\item each finite set is in the range of $H$,
\item the predicate ``$x \in H(e)$'' is computable, and
\item the function $e \mapsto \max H(e)$ is computable.
\end{enumerate}
\end{Definition}

Identifying $\mathcal P_{\mathsf{fin}} (\omega)$ with $2^{<\omega}$, we could alternatively characterize a canonical numbering as a total computable function $H : \omega \rightarrow 2^{< \omega}$ such that each finite set is in the range of $H$.

\begin{Definition}\cite{bkk-rodfest}
A infinite set $R \subseteq \omega$ is {\em canonically immune} if, and only if, there is a total computable function $h$ such that, for each canonical numbering $H$, and all but finitely many $e \in \omega$, 
\[
H(e) \subseteq R \implies |H(e)| \leq h(e).
\]
\end{Definition}

\begin{Definition}\cite{snr-def}
A function $f: \omega \rightarrow \omega$ is {\em strongly non-recursive} if, and only if, for each total computable function $h:\omega \rightarrow \omega$, one has $f(n) \neq h(n)$, for all but finitely many $n$.
\end{Definition}

\begin{Corollary}
The index set {\sc ci[\sc snr] }is $\Pi^1_1$-complete.
\end{Corollary}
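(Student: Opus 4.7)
The plan is to apply Theorem~\ref{pi one one} with $\mathcal{A}=\indset{snr}\subseteq\omega^\omega$ (taking $c=\omega$) and $\mathcal{B}=\indset{ci}\subseteq 2^\omega$. Both classes are arithmetic by inspection. Canonically immune sets are infinite by definition and fail to be cofinite: if $R$ is cofinite, then for any candidate witness $h$, the canonical numbering that returns the first $h(e)+1$ elements of $R$ at input $e$ directly violates the required bound. The tail-set condition will follow because, given a witness $h$ for $R$ and a finite $F$, one obtains a witness $h'$ for $R\triangle F$ by interleaving the family $H(e)\setminus F$ (for any canonical numbering $H$) with a fixed canonical numbering to produce a new canonical numbering, then paying an additive $|F|$ to pass from a bound on $|H(e)\setminus F|$ to one on $|H(e)|$. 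The needed Medvedev reduction $\indset{ci}\leq_s\indset{snr}$ will be invoked from the structural theory of canonically immune sets in \cite{bkk-rodfest}.

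The substantive task is to construct a $\Delta^0_2$ embedding $f:\omega^\omega\to\omega^\omega$ whose range is relatively closed in $\indset{snr}$. Fix a recursive coding $\langle\cdot\rangle$ of $\omega^{<\omega}$ and a standard enumeration $(h_i)_{i\in\omega}$ of partial computable functions, and set, for each $\alpha\in\omega^{<\omega}$, $n<|\alpha|$, and stage $s$,
\[
f_s(\alpha)(n)=\langle \alpha\upto(n+1),\,k_{n,s}(\alpha)\rangle,
\]
where $k_{n,s}(\alpha)$ is the least $k$ such that $\langle\alpha\upto(n+1),k\rangle\neq h_i(n)_s$ for every $i<n$ with $h_i(n)_s\downarrow$. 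Encoding $\alpha\upto(n+1)$ (rather than $\alpha\upto n$) makes each $f_s$ a $\prec$-isomorphism, since incomparable strings are separated at the first coordinate where they differ. Because each $h_i(n)_s$ stabilizes in $s$, so does $k_{n,s}(\alpha)$. For the limit $f$, if $h_i$ is total then the diagonalization forces $f(X)(n)\neq h_i(n)$ for every $n>i$, giving $f(X)\in\indset{snr}$ for every $X\in\omega^\omega$. The range of $f$ is closed in $\omega^\omega$: if $f(X_m)\to Y$, each coordinate of $Y$ decodes to a compatible initial segment of some $X\in\omega^\omega$, and continuity of the construction forces $Y=f(X)$.

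With these ingredients in place, Theorem~\ref{pi one one} yields $\Pi^1_1$-completeness. The main obstacle is not the embedding---a routine variation of the dnc-embedding used in the proof of Corollary~\ref{dnc-bi}---but rather the Medvedev reduction $\indset{ci}\leq_s\indset{snr}$: verifying (or citing) this requires using that an snr function $g$ grows so quickly that a suitable increasing subsequence of its range is canonically immune, uniformly in $g$. The tail-set verification for $\indset{ci}$ is also slightly delicate because a canonical numbering is required to enumerate \emph{all} finite sets, so the naive pullback $e\mapsto H(e)\setminus F$ must be extended before the witness $h$ for $R$ can be applied.
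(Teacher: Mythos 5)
Your application of Theorem~\ref{pi one one} has the roles of $\mathcal{A}$ and $\mathcal{B}$ reversed, and this is a fatal error. Recall from Definition~\ref{D2} that $\mathcal{G}[\mathcal{F}]$ is the set of codes $e$ such that $\{e\}^g \in \mathcal{F}$ for every oracle $g \in \mathcal{G}$: the first symbol names the class of \emph{oracles}, the second the class of \emph{outputs}. Theorem~\ref{pi one one} concludes that $\mathcal{A}[\mathcal{B}]$ is $\Pi^1_1$-complete, with the $\Delta^0_2$ embedding required to have range relatively closed in $\mathcal{A}$ (the oracle class) and the hypothesis being a Medvedev reduction of $\mathcal{B}$ to $\mathcal{A}$. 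For the corollary about $\indset{ci}[\indset{snr}]$, the oracle class is $\indset{ci}$ and the output class is $\indset{snr}$, so you must take $\mathcal{A} = \indset{ci}$ and $\mathcal{B} = \indset{snr}$. You instead set $\mathcal{A} = \indset{snr}$ and $\mathcal{B} = \indset{ci}$, which would prove $\Pi^1_1$-completeness of $\indset{snr}[\indset{ci}]$ --- a different set.

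This swap propagates through every component of your argument. The embedding you construct maps $\omega^\omega$ into $\indset{snr}$ (and is a perfectly reasonable $\Delta^0_2$ embedding with closed range, adapting the dnc idea), but what is needed is an embedding into $\indset{ci}$. The paper builds that embedding by taking a uniformly limit-computable sequence $x_0 < x_1 < \cdots$ hitting each $F_n = \bigcup_{r,e \leq n} D_r(e)$ (where $D_r$ enumerates candidate canonical numberings), and coding a path $X$ into the subsequence of the $x_i$'s that gets chosen; canonical immunity is then witnessed by the computable bound $h(e) = e+1$. Similarly, the Medvedev reduction you propose to invoke, $\indset{ci} \leq_s \indset{snr}$, points the wrong way: what the hypothesis of Theorem~\ref{pi one one} requires (and what the paper cites from the proof of Theorem 5.5 in \cite{bkk-rodfest}) is $\indset{snr} \leq_s \indset{ci}$, i.e., a uniform procedure producing an SNR function from a canonically immune oracle. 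Your own gloss --- ``an snr function $g$ grows so quickly that a suitable increasing subsequence of its range is canonically immune'' --- describes the reverse reduction, so it cannot serve here. Finally, the tail-set and no-finite/no-cofinite hypotheses are about $\mathcal{B}$, hence should be verified for $\indset{snr}$, not for $\indset{ci}$ as you did.
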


\begin{proof}
In what follows, we freely identify an element of $2^\omega$ or $2^{<\omega}$ with the subset of $\omega$ of which it is the characteristic function.  

In the first place, an inspection of the proof Theorem 5.5 in \cite{bkk-rodfest} reveals that there is a Medvedev reduction of {\sc snr} to {\sc ci}.  It follows from the definitions of {\sc ci} and {\sc snr} that the requisite topological and definability properties are satisfied in order to apply Theorem~\ref{pi one one}.  All that remains is to define a $\Delta^0_2$ embedding into {\sc ci} with a relatively closed range.  

To this end, we begin by defining a universal function for canonical numberings.  Let $\varphi : \omega^2 \rightarrow 2^{<\omega}$ be a universal partial recursive function.  Define 
\[
D_{r,s} (e) = \begin{cases}\varphi (r,e) &\mbox{if $\varphi (r,e)$ converges within $s$ stages},\\ \langle \emptyset \rangle &\mbox{otherwise}. \end{cases}
\]
Let $D_r (e) = \lim_{s\rightarrow \infty} D_{r,s} (e)$.  It follows that each canonical numbering appears as $D_r$, for some $r$, though not every $D_r$ is a canonical numbering.  The function $D$ is itself limit computable.

Define 
\[
F_n = \bigcup_{r,e \leq n} D_r (e)
\]
and note the sequence, $F_n$, of finite sets is uniformly limit computable.  Since every finite set is contained in some $F_n$, it follows that there exist $n_0 < n_1 < \ldots$ and $x_0 < x_1 < \ldots$ such that, for each $i$, 
\[
F_{n_i} \setminus F_{n_i - 1} \neq \emptyset
\]
and 
\[
x_i \in F_{n_i} \setminus F_{n_i - 1}.
\]
The sequence, $(x_i)_{i \in \omega}$, may be chosen to be strictly increasing and limit computable.  We may, therefore, take a computable sequence, $(x_{i,s})_{i,s \in \omega}$ such that, for each $i$,
\[
x_i = \lim_{s \rightarrow \infty} x_{i,s}
\]
and, for fixed $s$, the $x_{i,s}$ are all distinct.

For each $s \in \omega$ and $\alpha \in \omega^{<\omega}$, define $f_s (\alpha) \in 2^{<\omega}$ to have length 
\[
x_{(\alpha(0) + \ldots + \alpha (|\alpha| - 1) + |\alpha|-1) , s} + 1
\]
and be such that
\[
f_s (\alpha) (j) = \begin{cases}1 &\mbox{if } (\exists i,p \in \omega) ( p < |\alpha| \wedge j = x_{i,s}  \wedge i = \alpha(0) + \ldots + \alpha(p) + p),\\ 0 &\mbox{otherwise}.\end{cases}
\]
It follows that each $f_s$ is a $\prec$-isomorphism.  Let $f : \omega^\omega \rightarrow \textsc{ci}$ be as in the definition of a $\Delta^0_2$ embedding.

It remains to verify that each $f(X)$ is canonically immune.  Indeed, suppose that $H : \omega \rightarrow 2^{<\omega}$ is a canonical numbering, with $H = D_r$.  For each $e \geq r$ observe that $D_r(e) \subseteq F_e$ and, hence, for any $X \in \omega^\omega$,
\[
f(X) \cap D_r (e) \subseteq \{ x_0 , \ldots , x_e\}.
\]
In particular, $|f(X) \cap D_r (e)| \leq e+1$.  As $H$ was arbitrary, it follows that each $f(X)$ is canonically immune, witnessed by the computable function $h(e) = e+1$.

Finally, to see that the range of $f$ is relatively closed, simply observe that the range of $f$ is the intersection of {\sc ci} with the closed set
\[
\{ Y \in 2^\omega : (\forall j) (Y(j) \neq 0 \implies (\exists i) ( j = x_i )) \}.
\]  
This completes the proof.
\end{proof}

In Theorem~\ref{pi one one}, we required that the $\Delta^0_2$ embedding have relatively closed range in the class $\mathcal A$.  In fact, we can achieve the same result if we require that the map, $f$, have relatively $\Pi^0_2$ range in $\mathcal A$. 

\begin{Theorem}\label{pi 2 version}
Suppose that $\mathcal A \subseteq c^\omega$ (where $c$ is a finite ordinal or $\omega$) and $\mathcal B \subseteq 2^\omega$ are hyperarithmetic, with $\mathcal B$ a tail set containing no finite sets and no cofinite sets, and there is a $\Delta^0_2$ embedding $f: \omega^\omega \rightarrow c^\omega$ such that the range of $f$ is the intersection of a $\Pi^0_2$ class, $P$, with $\mathcal A$.  If there is a Medvedev reduction, $\Phi$, of $\mathcal B$ to $\mathcal A$, then the set of $\mathcal A$-universal $\mathcal B$-codes is $\Pi^1_1$-complete.
\end{Theorem}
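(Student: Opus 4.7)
The plan is to adapt the proof of Theorem~\ref{pi one one}, modifying the construction of $\Phi_e$ so that the extra bits added to $\Phi(\sigma)$ are damped by a computable $\Pi^0_2$-approximation to $P$.  Write $P = \bigcap_n U_n$ with $(U_n)$ a uniformly c.e.~decreasing sequence of open subsets of $c^\omega$, enumerated by $W_n \subseteq c^{<\omega}$ with computable stage approximations $W_{n,s}$.  For $\sigma \in c^{<\omega}$ and $s \in \omega$, set
\[
k(\sigma,s) = \max\{ k : (\forall k' \leq k)(\exists \tau \preceq \sigma)(\tau \in W_{k',s})\},
\]
which is a computable quantity measuring how deeply $\sigma$ has been certified to lie in $P$.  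Keeping $T_e$, $m_{e,\sigma}$, and $n_{e,\sigma}$ exactly as in the original proof, I would define $n'_{e,\sigma} = \min\{n_{e,\sigma},\, k(\sigma,|\sigma|)\}$ and
\[
\Phi_e(\sigma) = (\Phi(\sigma) \cup [0, n'_{e,\sigma}]) \cap [0, m_{e,\sigma}].
\]

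The reduction from $\indset{NoPath}$ then proceeds as before.  The case where $\{e\}$ is not total is unchanged.  If $\{e\}$ is total and $X \in [T_e]$, then $f(X) \in f(\baire) = P \cap \mathcal A \subseteq P$, so for every fixed $k$ only finitely many prefixes of $f(X)$ and finitely many stages are needed to witness $f(X) \in U_0 \cap \cdots \cap U_k$; hence $k(\sigma, |\sigma|) \to \infty$ along $\sigma \prec f(X)$.  Combined with the original proof's conclusion that $n_{e,\sigma} \to \infty$ along these prefixes, this gives $n'_{e,\sigma} \to \infty$, so $\Phi_e(f(X)) = \omega \notin \mathcal B$, since $\mathcal B$ contains no cofinite sets.

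The remaining case is $\{e\}$ total with $[T_e] = \emptyset$, where I must bound $n'_{e,\sigma}$ along prefixes of every $Y \in \mathcal A$.  For $Y = f(X)$ with $X \in \baire$, the original proof already bounds $n_{e,\sigma}$, which suffices.  The essential new subcase is $Y \in \mathcal A \setminus f(\baire) = \mathcal A \setminus P$: here some $n_0$ satisfies $Y \notin U_{n_0}$, so no prefix of $Y$ is ever enumerated in $W_{n_0}$, forcing $k(\sigma,s) < n_0$ and hence $n'_{e,\sigma} < n_0$ along all prefixes of $Y$.  In both subcases $\Phi_e(Y)$ differs finitely from $\Phi(Y) \in \mathcal B$, and the tail-set hypothesis gives $\Phi_e(Y) \in \mathcal B$, completing the reduction.

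The main obstacle to verify is that the two growth rates align in the $X \in [T_e]$ case: $k(\sigma,|\sigma|)$ uses the stage parameter $|\sigma|$, and I need it to diverge in tandem with $n_{e,\sigma}$ so that their minimum still does.  This is exactly what the observation about finitely many witnesses for $f(X) \in P$ provides.  The rest of the bookkeeping, in particular the bound on $n_{e,\sigma}$ when $Y = f(X)$ and $[T_e] = \emptyset$, transfers mechanically from the proof of Theorem~\ref{pi one one}.
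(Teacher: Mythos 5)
Your proposal is correct and follows essentially the same strategy as the paper: you modify the Theorem~\ref{pi one one} construction by damping the added segment $[0, n_{e,\sigma}]$ by a computable quantity measuring how far $\sigma$ has been certified to lie in the $\Pi^0_2$ class $P$, so that the closedness hypothesis (used only in the case $Y \in \mathcal A \setminus \mathrm{range}(f)$) can be dropped. The only difference is bookkeeping: the paper folds the $\Pi^0_2$ certification directly into the definition of $n_{e,\sigma}$, taking a $\max$ of $\min\{s, |\beta|, n\}$ over tuples where $\beta$ itself must lie in all $U_{k,|\sigma|}$ for $k \leq n$, whereas you keep the old $n_{e,\sigma}$ unchanged and separately compute $k(\sigma, |\sigma|)$, taking $n'_{e,\sigma} = \min\{n_{e,\sigma}, k(\sigma,|\sigma|)\}$. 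Both formulations verify identically in each case of the reduction; yours is arguably cleaner to read since the two growth constraints are visibly decoupled.
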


Note that this result is neither a generalization of Theorem~\ref{pi one one} nor {\em vice versa}, since not every closed set is a $\Pi^0_2$ class and not every $\Pi^0_2$ class is closed.  Before proceeding with the proof of Theorem~\ref{pi 2 version}, we recall the definition of a $\Pi^0_2$ class.

\begin{Definition}\label{def of pi 2}
Let $c$ be a finite ordinal or $\omega$.  A class, $P \subseteq c^\omega$, is a {\em $\Pi^0_2$ class} if, and only if, there is a total computable function $h : \omega^2 \rightarrow c^{<\omega}$ such that, for each $X \in c^\omega$, 
\[
X \in P \iff (\forall n) (\exists s) (h(n,s) \prec X).
\]
\end{Definition}

\begin{proof}[Proof of Theorem~\ref{pi 2 version}]
Let $\mathcal A$, $\mathcal B$, $f$, $P$ and $\Phi$ be as in the statement of Theorem~\ref{pi 2 version}.  Let $f_s$ be as in the definition of a $\Delta^0_2$ embedding, witnessing that $f$ is such an embedding.  Again, let $f_*$ be the pointwise limit of the sequence $(f_s)_{s \in \omega}$.  Finally, let $h : \omega^2 \rightarrow c^{<\omega}$ be a total recursive function, as in Definition~\ref{def of pi 2}, witnessing that $P$ is a $\Pi^0_2$ class.  For convenience, we write
\[
U_{n,s} = \{ \sigma \in c^{<\omega} : (\exists t \leq s)(h(n,t) \preceq \sigma)\}.
\]
Each $U_{n,s}$ is, by definition, closed under extension.  With this notation, for each $X \in c^\omega$,
\[
X \in P \iff (\forall n) (\exists s) (\mbox{an initial segment of $X$ is in $U_{n,s}$}).
\]

As in the proof of Theorem~\ref{pi one one}, we reduce the $\Pi^1_1$-complete set $\indset{NoPath}$ to $\mathcal A [ \mathcal B]$.  With respect to the coding of recursive trees, we adopt notation from the proof of Theorem~\ref{pi one one}.

What follows is similar in character to the proof of Theorem~\ref{pi one one}, with the addition of some refinements to accomodate the fact that the range of $f$ may not be closed.  Given $e \in \omega$ and $\sigma \in c^{<\omega}$, let 
\begin{align*}
n_{e ,\sigma} = \max\{ \min \{ s , |\beta|, n\} : (\exists \alpha &\in T_{e , |\sigma|})( \beta \preceq f_s (\alpha) \wedge \sigma \succeq \beta\\
&  \wedge (\forall k \leq n) (\beta \in U_{k,|\sigma|}))\}.
\end{align*}
As before, let
\[
m_{e , \sigma} = \max \{ m : (\forall i \leq m) ( \{ e \}_{|\sigma|} (\tau_i) \downarrow\},
\]
where $\tau_0 , \tau_1 , \ldots$ is a fixed recursive enumeration of $c^{<\omega}$.  Define 
\[
\Phi_e (\sigma) = (\Phi(\sigma) \cup [0,n_{e , \sigma}]) \cap [0 , m_{e , \sigma}].
\]
As in the proof of Theorem~\ref{pi one one}, if $\{ e \}$ is not total, then $m_{e , \sigma}$ is bounded as $\sigma$ varies over $c^{<\omega}$ and, consequently, $\Phi_e (Y)$ is the characteristic function of a finite set, for every $Y \in c^\omega$.  Thus, $\Phi_e$ is not a Medvedev reduction of $\mathcal B$ to $\mathcal A$, since $\mathcal B$ contains no finite sets.

Therefore, suppose that $\{ e \}$ is total and $T_e$, the recursive tree coded by $e$, has an infinite branch, $X \in [T_e]$.  To show that $\Phi_e$ is not a Medvedev reduction of $\mathcal B$ to $\mathcal A$, it will suffice to show that
\[
\lim_{k \rightarrow \infty} n_{e , X \upto k} = \infty
\]
as $\Phi_e$ will then be the characteristic function of $\omega$.  Indeed, fix $n_0 \in \omega$.  Let $\sigma_0 \prec f(X)$, with $|\sigma_0|$ such that
\begin{itemize}
	\item $|\sigma_0| \geq n_0$ and
	\item $\sigma_0 \in U_{n_0 , |\sigma_0|}$.
\end{itemize}
Now choose $\sigma_1$ such that
\begin{itemize}
	\item $\sigma_0 \preceq \sigma_1 \prec f(X)$ and 
	\item there exists $\alpha_0 \in T_{e , |\sigma_1|}$ such that $\sigma_0 \prec f_* (\alpha_0) \prec f(X)$.
\end{itemize}
Finally, let $s_0$ be such that
\begin{itemize}
	\item $s_0 \geq n_0$ and
	\item $f_{s_0} (\alpha_0) = f_* (\alpha_0)$.
\end{itemize}
It follows from the definition of $n_{e , \sigma}$ that, for each $\sigma \succeq \sigma_1$
\[
n_{e , \sigma} \geq \min\{ s_0 , f_* (\alpha_0) , n_0\} \geq n_0.
\]

Suppose now that $[T_e] = \emptyset$.  We must show that $\Phi (Y) = \lim_{j \rightarrow \infty} \Phi(Y \upto j) \in \mathcal B$, for each $Y \in \mathcal A$.  In the first place, suppose that $Y \in \mathrm{range} (f)$, say with $Y = f(X)$.  To show that $\Phi(Y) \in \mathcal B$, it will suffice to show that 
\[
\{ n_{e , \sigma} : \sigma \prec f(X) \}
\]
is bounded.  Let $\alpha_0$ be longest with $\alpha_0 \in T_e$ and let $s_0$ be such that, for each $s \geq s_0$ and $\alpha \preceq \alpha_0$, we have $f_s (\alpha) = f_* (\alpha)$.  Suppose now that $\sigma \prec f(X)$, with $f_* (\alpha_0) \prec \sigma$, and $s , \beta$ are such that there exists $\alpha \in T_e$, with $f_* (\alpha_0) \prec \beta \preceq f_s (\alpha)$ and $\beta \preceq \sigma$.  If $s \geq s_0$, we have $\alpha_0 \prec \alpha$, since $f_s (\alpha_0) = f_* (\alpha_0)$ and $f_s$ is a $\prec$-isomorphism.  Hence, if $\sigma \prec f(X)$ and $s,\beta$ are such that there exists $\alpha \in T_e$ with $\beta \prec f_s (\alpha)$ and $\sigma \succeq \beta$, then either $s < s_0$ or $|\beta| \leq f_* (\alpha_0)$.  It follows that
\[
n_{e , \sigma} \leq \min \{ s_0 - 1 , |f_* (\alpha_0)|\}
\]
for $\sigma \prec f(X)$.

Finally, suppose that $Y \in \mathcal A$, but $Y \notin \mathrm{range} (f)$.  Again, we will see that
\[
\{ n_{e , \sigma} : \sigma \prec f(X) \}
\]
is bounded.  Since $Y \notin \mathrm{range} (f)$, let $n_0$ be such that, for every $n > n_0$, no initial segment of $Y$ lies in $\bigcup_t U_{n,t}$.  It follows from the definition of $n_{e , \sigma}$ that, for any $\sigma \prec Y$, we have $n_{e , \sigma} \leq n_0$.
\end{proof}

We conclude with a corollary of this result.  Recall from \cite{jo-fpff} that $\indset{dnc} <_s \indset{dnc}_n$.  Hence, there are Medvedev reductions of $\indset{im}$ and $\indset{bi}$ to $\indset{dnc}_n$, for each $n \in \omega$.  (See Definition~\ref{dnc def} for the definition of $\indset{dnc}_n$.)

\begin{Corollary}
The index sets $\indset{dnc}_n [\indset{im}]$ and $\indset{dnc}_n [\indset{bi}]$ are both $\Pi^1_1$-complete.
\end{Corollary}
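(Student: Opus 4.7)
The plan is to apply Theorem~\ref{pi 2 version} with $\mathcal A = \indset{dnc}_n$ (taking $c = n+1$) and $\mathcal B \in \{\indset{im}, \indset{bi}\}$, assuming $n \geq 2$ throughout so that $\indset{dnc}_n$ is non-empty. The easy hypotheses of Theorem~\ref{pi 2 version} check quickly: $\indset{im}$ and $\indset{bi}$ are hyperarithmetic tail sets, and neither contains any finite set nor any cofinite set (a cofinite set is an infinite c.e.\ subset of itself, so is not immune). The required Medvedev reductions $\indset{im} \leq_s \indset{dnc}_n$ and $\indset{bi} \leq_s \indset{dnc}_n$ follow by composing the standard reductions of $\indset{im}, \indset{bi}$ to $\indset{dnc}$ (used in the proof of Corollary~\ref{dnc-bi}) with the reduction $\indset{dnc} \leq_s \indset{dnc}_n$ from \cite{jo-fpff}. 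The real work lies in constructing a suitable $\Delta^0_2$ embedding.

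The embedding from Corollary~\ref{dnc-bi} outputs unbounded values and cannot be reused. I propose instead to spread the code of $X \in \omega^\omega$ across infinitely many positions via the unary map $\pi : \omega^{<\omega} \to 2^{<\omega}$ given by $\pi(\langle a_0, \ldots, a_{k-1}\rangle) = 1^{a_0} 0 1^{a_1} 0 \cdots 1^{a_{k-1}} 0$, and then translate the resulting bits into values in $\{0, 1, 2\} \subseteq \{0, \ldots, n\}$ in a DNC-respecting way. Concretely, for $\alpha \in \omega^{<\omega}$ and $j < |\pi(\alpha)|$, set
\[
f_s(\alpha)(j) \;=\; \begin{cases} \pi(\alpha)(j), & \text{if } \{j\}_s(j) \neq \pi(\alpha)(j), \\ 2, & \text{otherwise.} \end{cases}
\]
Each $f_s$ is a $\prec$-isomorphism because, at any $j$ where $\pi(\alpha)$ and $\pi(\beta)$ disagree, the two distinct bits cannot both coincide with the single value $\{j\}_s(j)$, so the disagreement is preserved. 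The pointwise limit $\lim_s f_s(\alpha)$ exists since $\{j\}_s(j)$ stabilizes, so $f$ is a $\Delta^0_2$ embedding, and one sees $f(X) \in \indset{dnc}_n$ directly: at each $j$, either $f(X)(j) = \pi(X)(j)$ with $\{j\}(j) \neq \pi(X)(j)$, or $f(X)(j) = 2$ with $\{j\}(j) = \pi(X)(j) \in \{0,1\}$.

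The main step, and the point where some care is needed, is to exhibit a $\Pi^0_2$ class $P$ with $\mathrm{range}(f) = \indset{dnc}_n \cap P$. Take $P$ to be the set of $Y \in (n+1)^\omega$ satisfying (i) $Y(j) \leq 2$ for every $j$, (ii) $Y(j) = 2 \Rightarrow \{j\}(j)\!\downarrow \in \{0, 1\}$ for every $j$, and (iii) the set of $j$ at which either $Y(j) = 0$, or $Y(j) = 2$ and $\{j\}(j) = 0$, is infinite. Clause (i) is $\Pi^0_1$; the implication in (ii) is equivalent to $Y(j) \neq 2 \vee \exists s (\{j\}_s(j) \in \{0,1\})$, hence $\Sigma^0_1$, making (ii) a $\Pi^0_2$ condition; and the event in (iii) is similarly $\Sigma^0_1$, making (iii) $\Pi^0_2$. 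Thus $P$ is $\Pi^0_2$. The inclusion $\mathrm{range}(f) \subseteq \indset{dnc}_n \cap P$ is immediate from the definition once one notes that $\pi(X)$ always has infinitely many zeros. For the reverse inclusion, given $Y \in \indset{dnc}_n \cap P$, decode a bit sequence $\tau(Y) \in 2^\omega$ by setting $\tau(Y)(j) = 0$ when the event in (iii) holds at $j$ and $\tau(Y)(j) = 1$ otherwise; clause (iii) forces $\tau(Y) \in \pi(\omega^\omega)$, and a three-way case analysis on the value of $Y(j) \in \{0,1,2\}$, using DNC-ness of $Y$ to handle $Y(j) \in \{0,1\}$ and clause (ii) to handle $Y(j) = 2$, confirms $f(\pi^{-1}(\tau(Y))) = Y$. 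Theorem~\ref{pi 2 version} then yields the $\Pi^1_1$-completeness of both $\indset{dnc}_n[\indset{im}]$ and $\indset{dnc}_n[\indset{bi}]$.
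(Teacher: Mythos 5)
Your proof is correct, and the construction you use is genuinely different from the paper's.

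The paper builds the $\Delta^0_2$ embedding around the limit-computable sequence $x_0 < x_1 < \cdots$ enumerating the positions $j$ with $\{j\}(j)\uparrow$; it writes the code of $X$ in $\{0,1\}$ only at those (DNC-unconstrained) positions and fills in every other position with the value of a fixed $\indset{dnc}_n$ function $H$. Its relatively-$\Pi^0_2$ set $P$ is then expressed using the $x_i$. Your construction instead works at every position: you spread $X$ out by the unary encoding $\pi$, and at each coordinate $j$ you output the intended bit $\pi(X)(j)$ unless that bit would violate the DNC constraint, in which case you output the escape value $2$. This removes two of the paper's ingredients entirely — you never need the auxiliary non-computable $H$, and you never need the limit-computable enumeration of the divergence set — which makes your verification that $\mathrm{range}(f)$ is $\Pi^0_2$ relative to $\indset{dnc}_n$ more transparent: the conditions (i)–(iii) are explicitly a $\Pi^0_1$ clause and two $\forall j\,\Sigma^0_1$ clauses, so $P$ is visibly of the form required by Definition~\ref{def of pi 2}. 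Your check that each $f_s$ is a $\prec$-isomorphism (distinct bits cannot both equal $\{j\}_s(j)$, so disagreements of $\pi$ survive) and your three-way case analysis showing $f(\pi^{-1}(\tau(Y))) = Y$ for $Y \in \indset{dnc}_n \cap P$ are both sound. One remark on scope: you explicitly restrict to $n \geq 2$ so that the escape value $2$ is available; this is the natural regime for the corollary, since the cited result $\indset{dnc} <_s \indset{dnc}_n$ from \cite{jo-fpff} concerns $n \geq 2$, so this is not a real restriction.
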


\begin{proof}
In light of Theorem~\ref{pi 2 version}, it will suffice to produce a $\Delta^0_2$ embedding, $f$, of $\omega^\omega$ into $\indset{dnc}_n$, such that the range of $f$ the intersection of $\indset{dnc}_n$ with a $\Pi^0_2$ class.  Note that such an embedding cannot have a closed range, otherwise it would be a homeomorphism between $\omega^\omega$ and a compact space.  

Let $\varphi : \omega^2 \rightarrow n$ be a universal partial recursive function and, for each $s \in \omega$, let $x_{0,s} < x_{1,s} < \ldots$ enumerate those $x$ such that $\varphi_x (x)$ has not yet converged in $s$ stages.  Each sequence $(x_{i,s})_{s \in \omega}$ is eventually constant, say with limit $x_i$.  Observe that $x_0 < x_1 < \ldots$ enumerate those $x$ such that $\varphi_x (x)$ diverges.

Fix a $\indset{dnc}_n$ function, $H$.  For each $s \in \omega$ and $\alpha\in \omega^{<\omega}$, define $f_s (\alpha) \in n^{<\omega}$ of length 
\[
x_{(\alpha(0) + \ldots + \alpha(|\alpha| - 1)  + |\alpha|-1) , s} + 1
\]
such that
\[
f_s (\alpha) (j) = \begin{cases}
1 &\mbox{if } (\exists i,p) (j = x_{i,s} \wedge i = \alpha(0) + \ldots + \alpha(p) + p),\\
0 &\mbox{if } (\exists i) (j = x_{i,s}) \mbox{, but there is no $p$ as above},\\
H(j) &\mbox{otherwise}.
\end{cases}
\]
Let $f : \omega^\omega \rightarrow \indset{dnc}_n$ be the $\Delta^0_2$ embedding induced by the $f_s$.  It follows that, for each $Y \in \omega^\omega$ and each $j \in \omega$
\[
f(Y)(j) = \begin{cases}
1 &\mbox{if } (\exists i,p) (j = x_i \wedge i = Y(0) + \ldots + Y(p) + p),\\
0 &\mbox{if } (\exists i) (j = x_i) \mbox{, but there is no $p$ as above},\\
H(j) &\mbox{otherwise}.
\end{cases}
\]

Note that, if $G \in \indset{dnc}_n$, then $G(x) = H(x)$, for each $x \notin \{ x_0 , x_1 , \ldots \}$.  Hence,
\[
\mathrm{range} (f) = \indset{dnc}_n \cap \{ Y \in n^\omega : (\forall i) (Y(x_i) \in \{0,1\}) \wedge (\exists^\infty i) (Y(x_i) = 1) \}.
\]
Observe that, since the sequence $(x_i)_{i \in \omega}$ is limit computable, it follows that the latter set in the intersection above is $\Pi^0_2$.  In other words, $\mathrm{range} (f)$ is the intersection of $\indset{dnc}_n$ with a $\Pi^0_2$ class.  This completes the proof.
\end{proof}

\end{document}